\DeclareMathOperator{\Gr}{Gr}
\DeclareMathOperator{\codim}{codim}
\DeclareMathOperator{\rank}{rk}
\DeclareMathOperator{\conv}{conv}
\DeclareMathOperator{\CI}{CI}
\newcommand{\mD}{\mathcal{D}}
\newcommand{\mM}{\mathcal{M}}
\newcommand{\mP}{\mathcal{P}}
\newcommand{\mE}{\mathcal{E}}
\newcommand{\mB}{\mathcal{B}}
\newcommand{\mC}{\mathcal{C}}
\newcommand{\mF}{\mathcal{F}}
\newcommand{\mS}{\mathcal{S}}
\newcommand{\mR}{\mathcal{R}}
\newcommand{\mU}{\mathcal{U}}
\newcommand{\rk}{\textrm{rk }}
\newcommand{\ZZ}{\mathbb{Z}}
\newcommand{\RR}{\mathbb{R}}
\newcommand{\Sorted}{\textsf{Sorted}\xspace}
\definecolor{ao}{rgb}{0.0, 0.5, 0.0}
\definecolor{myred}{rgb}{0.81, 0.09, 0.13}
\newtheorem{theorem}{Theorem}[section]
\theoremstyle{definition}
\newtheorem{definition}[theorem]{Definition}
\newtheorem{notation}[theorem]{Notation}
\newtheorem{example}[theorem]{Example}
\newtheorem{remark}[theorem]{Remark}
\newtheorem{question}[theorem]{Question}
\theoremstyle{plain}
\newtheorem{lemma}[theorem]{Lemma}
\newtheorem{proposition}[theorem]{Proposition}
\newtheorem{corollary}[theorem]{Corollary}
\title{Combinatorics of Essential Sets for Positroids}
\author{Fatemeh Mohammadi and Francesca Zaffalon}
\date{}
\begin{document}

\maketitle
\begin{abstract}

\noindent 
Positroids {are a family} of matroids introduced by Postnikov in the study of non-negative Grassmannians. Postnikov identified several combinatorial objects in bijections with positroids, among which are bounded affine permutations. On the other hand, the notion of essential sets, introduced for permutations by Fulton, was used by Knutson in the study of the special family of interval rank positroids. 
We generalize Fulton's essential sets to bounded affine permutations. The bijection of the latter with positroids, allows the {study of the relationship between them}. From the point of view of positroids, essential sets are maximally dependent cyclic intervals. We define connected essential sets and prove that they give a facet description of the positroid polytope, as well as equations defining the positroid variety. We define a subset of essential sets, called core, which contains minimal rank conditions to uniquely recover a positroid. We provide an algorithm to retrieve the positroid satisfying the rank conditions in the core or any compatible rank condition on cyclic intervals.
\end{abstract}

\section{Introduction}

Matroids are combinatorial objects introduced to encode the concept of linear dependence arising from vector spaces, graphs, algebraic field extensions, and various other contexts. Positroids form a particularly well-behaved class of matroids.
On the combinatorial side, Postnikov has identified several families of objects in bijection with positroids, such as bounded affine permutations, plabic graphs, Grassmann necklaces, and Le diagrams~\cite{postnikov2006total}.~The realization space of a positroid inside the totally non-negative Grassmannian, called a positroid cell, is homeomorphic to an open ball. Positroid cells fit together to form a CW decomposition of the totally non-negative Grassmannian~\cite{postnikov2009matching}. Positroids have also been studied in relation to problems coming from quantum physics, as linear images of some positroid cells define a tiling of the amplituhedron, an object encoding the geometry of scattering amplitudes~\cite{Arkani-Hamed:2016byb, even2023cluster, mohammadi2021triangulations}.

\vspace{5pt}
Here, we introduce a new combinatorial description of positroids via \emph{ranked essential sets} and explain how to derive information about the positroid's structure from them. While many combinatorial objects studied in relation to positroids encode their data implicitly, ranked essential sets provide an intuitive understanding of the positroid's dependencies. These dependencies are succinctly encoded by relying on the cyclic order of the ground set. Additionally, ranked essential sets offer insights into the geometry of the positroid's realization space within the Grassmannian and the totally non-negative Grassmannian. As an example, the dimension {of positroid cells} can be explicitly computed in terms of the ranked essential sets.

\vspace{5pt}
Essential sets were initially introduced by Fulton \cite{fulton1992flags} as objects associated to permutations, to study the degeneracy loci of maps of flagged vector bundles. Their combinatorial characterization and properties were further investigated in \cite{eriksson1996combinatorics, eriksson1995size}. Their introduction within the context of positroids is due to Knutson \cite{knutson2010puzzles}, who used them in Fulton's original definition, for the subclass of interval rank positroids. These positroids are defined by rank conditions on intervals rather than cyclic intervals.

\vspace{5pt}
In this work, we extend the notion of essential sets from permutations to bounded affine permutations. Subsequently, we demonstrate how essential sets not only contain sufficient information to reconstruct the bounded affine permutation but also correspond to maximally dependent cyclic intervals of the positroid. We refer to the essential sets of a positroid, along with their rank, as the \emph{ranked essential family} of the positroid. The rank of a positroid on any cyclic interval can be computed in terms of the information given by the ranked essential family.
Furthermore, we define a subset of this family, known as \textit{core}, and prove that it gives minimal rank constraints necessary to uniquely define a positroid.

 \vspace{5pt}
\noindent\textbf{Structure of the paper.} In \Cref{section : preliminaries}, we introduce the language of matroids and positroids that we will need, as well as bounded affine permutations and their relation to positroids. In \Cref{section : ess set and permutation}, we define the diagram of a bounded affine permutation, whose top-right corners are defined to be the essential sets. In \Cref{subsection : rank and ess sets}, we prove an alternative definition via the rank function. This allows us to prove in \Cref{thm : rank cyclic intervals} that the information encoded in the ranked essential family is enough to uniquely describe a positroid. In \Cref{subsection : algorithm}, we present an algorithm, \Cref{alg : essential to permutation}, which constructs, whenever it exists, the bounded affine permutation of a positroid satisfying some rank conditions on cyclic interval. If the initial rank requirements form a ranked essential family, the algorithm is guaranteed to terminate and give the unique solution. The minimal set of rank conditions that will uniquely determine a positroid via \Cref{alg : essential to permutation} is introduced in \Cref{subsection : minimal rank conditions}. In \Cref{subsection : comb of essential sets}, we give an axiomatic description of positroids in terms of ranked essential families. In \Cref{section : positroid cells} we describe how ranked essential families characterize realization spaces of positroids. In the last section, \Cref{section : small rank}, we show that in the rank $2$ case, we retrieve the description provided in \cite{mohammadi2022computing}. 

\vspace{5pt}
\noindent \textbf{Acknowledgment.}~F.M. was partially supported by the grants G0F5921N (Odysseus programme) and G023721N from the Research Foundation - Flanders (FWO), the UiT Aurora project MASCOT and the grant iBOF/23/064 from the KU Leuven. F.Z. was supported by the FWO fundamental research fellowship (1189923N) and FWO long stay abroad grant (V414224N).

\section{Preliminaries}\label{section : preliminaries}
We begin this section by recalling the definitions of matroids and positroids. Specifically, we recall two among the various equivalent definitions of a matroid: one through its maximally independent sets, and the other through its closed sets~\cite{oxley2006matroid}.

\begin{definition}[Basis]
    A \emph{matroid} $\mM$ is a pair $(E, \mB)$ such that $E$ is a finite set, called \emph{ground set}, and $\mB$ is a collection of subsets of $E$, called \emph{bases}, satisfying:
    \begin{itemize}
       \item[(B1)] $\mB \neq \emptyset$;
        \item[(B2)] For every $A,B\in \mB$ and $a \in A\setminus B$, there exists an element $b\in B\setminus A$ such that $(A\setminus \{a\}) \cup \{b\}\in \mB$.
    \end{itemize}
    The size of any $B\in \mB$ is called \emph{rank} of the positroid.
\end{definition}

\begin{definition}[Flats]
    A matroid $\mM$ is a pair $(E,\mF)$ such that $E$ is a finite set, called \emph{ground set}, and $\mF$ is a collection of subsets of $E$, called \emph{closed sets} or \emph{flats}, satisfying:
    \begin{itemize}
        \item[(F1)] $E\in\mF$;
        \item[(F2)] If $X,Y\in \mF$, then $X\cap Y\in \mF$;
        \item[(F3)] If $X\in \mF$ and $x\in E$, there exists a unique $Y\in \mF$ such that $X\cup\{x\}\subseteq Y$ and there is no $Z\in \mF$ such that $X\cup\{x\}\subsetneq Z \subsetneq Y$.
    \end{itemize}
    We will denote the minimal flat containing a subset $S\subseteq E$, or \emph{closure} of $S$, by $\langle S\rangle$.
\end{definition}

\begin{notation}
    Throughout the paper, without loss of generality, we assume $E=\{1,\dots,n\}=[n]$. The family of cardinality $k$ subsets of $[n]$ is denoted by $\binom{[n]}{k}$. For $i\in \ZZ$, we denote by $i \mod n$ its representative in $[n]$.
    For any $i,j\in \mathbb{Z}$, %with $(k-1)n < i \leq kn$ and $(h-1)n< j \leq hn$ for $k,h\in \mathbb{Z}$, we define $[i,j]$ as follows:
    % \begin{itemize}
    % \item $\{ i \mod n, i+1 \mod n, \dots, j \mod n\}$ if ($k=h$ and $i<j$),
    % \item $\{ i \mod n, i+1 \mod n, \dots, kn \mod n, kn +1 \mod n, \dots, (h-1)n \mod n, (h-1)n+1 \mod n, \dots, j \mod n\}$, otherwise.
    % \end{itemize}
    %In other words, 
    {we denote by $[i,j]$} the cyclic interval in $[n]$ from $i \mod n$, to $j \mod n$. For instance, if $n=8$, then $[7,10]=[7,2]=\{7,8,1,2\}$. We use $<$ to denote the normal ordering on $\mathbb{Z}$ and $<_i$ to denote the ordering on $[n]$ where $i$ is the smallest element, i.e. $i<_i i+1 <_i \dots <_i n <_i 1 <_i \dots <_i i-1$. {We extend this order to $\ZZ$ by defining, for $j,k\in \ZZ$, $j <_i k$ if and only if $j \mod n <_i k \mod n$.}
\end{notation}

\vspace{5pt}
Matroids arise as a generalization of the notion of linear dependency. If a matroid is associated to the dependency behavior of a finite set of vectors in a vector space, it is called realizable. Formally, {we have the following definition}.

\begin{definition}[Realizable matroid] \label{def : representable matroid}
    Let $\mathbb{K}$ be a field. A matroid $\mM = ([n], \mB)$ is said to be \emph{realizable over $\mathbb{K}$} if there exist {a positive integer $k$} and a $k \times n$ matrix $M$ with coefficients in $\mathbb{K}$ such that for every $I \in \binom{[n]}{k}$
    \[ \det(M_I) \neq 0 \quad \iff \quad I \in \mB,  \]
    where $M_I$ denotes the square $k \times k$ matrix whose columns are the columns indexed by $I$ in $M$. {The matrix $M$} is said to be a realizing matrix for $\mM$.
\end{definition}

A positroid is a matroid realizable over $\mathbb{R}$ with a positivity condition.
\begin{definition}[Positroid]
    A \emph{positroid} is a matroid $\mM$ which is realizable over $\mathbb{R}$ and such that there exists a realizing matrix $M$ for $\mM$ with non-negative maximal minors. In other words, for every $I\in \binom{[n]}{k}$, where $M_I$ is as defined in \Cref{def : representable matroid}, we have $\det(M_I) \geq 0$.
\end{definition}

Positroids, along with their combinatorial counterparts, have been subject to extensive research \cite{postnikov2006total}. {Moreover, they found applications in many problems coming from different branches of science~\cite{williamsICMtalk}. One important example is the study of the amplituhedron, an object encoding scattering amplitudes, quantities of interest in theoretical physics~\cite{arkani2014amplituhedron}.}
Among the combinatorial objects labeling positroids, bounded affine permutations are of particular interest to us.

\begin{definition}[Bounded affine permutation]
Consider a positive integer $n$. A \emph{bounded affine permutation of size $n$} is a bijection $\pi: \mathbb{Z} \to \mathbb{Z}$ satisfying the following conditions for all $i \in \mathbb{Z}$:
\[
\pi(i+n) = \pi(i) + n \quad \text{and} \quad i \leq \pi(i) \leq i + n.
\]
\end{definition}

Bounded affine permutations and positroids are in a one-to-one correspondence, as shown in \cite{postnikov2006total}. This correspondence is constructed as follows: given a positroid $\mathcal{P}$, we define the associated bounded affine permutation $\pi: \mathbb{Z} \to \mathbb{Z}$ by
\[ \pi(i) = \min\{j\geq i \mid i\in \langle [i+1,\dots, j]\rangle \} \quad \text{ for every } i \in \ZZ. \]
In particular, if $i$ is a loop of the positroid, then $\pi(i)=i$, and if $i$ is a coloop of the positroid, then $\pi(i)=i+n$. Moreover, information on the rank of a positroid on cyclic intervals can be extracted from its associated bounded affine permutation. {Indeed, by specializing \cite[Theorem 3]{rankfunction2020} to the case of cyclic intervals, we obtain the following result.}

\begin{proposition}\label{prop : properties positroid permutations}
    Let $\pi$ be a bounded affine permutation of size $n$ and let $\mP$ be the corresponding positroid.
\begin{enumerate}
    \item The rank of $\mP$ is equal to the number of $i\in [n]$ such that $\pi(i)>n$.
    \item The rank of a cyclic interval $[i,j]\subset [n]$ is given by the number of $\ell \in [i,j]$ such that $\pi(\ell)>j$.
\end{enumerate}
\end{proposition}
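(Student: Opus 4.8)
The plan is to prove part (2) directly from the defining relation $\pi(\ell)=\min\{m\ge \ell\mid \ell\in\langle[\ell+1,\dots,m]\rangle\}$, and then to obtain part (1) as the special case $[i,j]=[1,n]=[n]$, since the set $\{\ell\in[1,n]\mid\pi(\ell)>n\}$ is exactly the one counted in (1). For (2), fix the integer lift so that $[i,j]=\{i,i+1,\dots,j\}$ with $i\le j\le i+n-1$, and put $J:=\{\ell\in\{i,\dots,j\}\mid \pi(\ell)>j\}$ (so $\pi(\ell)$ is compared, as an integer, with the concrete value $j$; recall $\ell\le\pi(\ell)\le\ell+n$). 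I claim $J$ is a basis of the restriction $\mP|_{[i,j]}$, which immediately gives $\rank([i,j])=|J|$, as desired.

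\emph{Independence of $J$.} Write $J=\{\ell_1<\ell_2<\dots<\ell_r\}$. For each $t$ we have $\pi(\ell_t)>j\ge\ell_t$, so $j$ is an integer with $\ell_t\le j<\pi(\ell_t)$; minimality of $\pi(\ell_t)$ then forces $\ell_t\notin\langle[\ell_t+1,\dots,j]\rangle$. Since $\{\ell_{t+1},\dots,\ell_r\}\subseteq[\ell_t+1,\dots,j]$ and the closure operator is monotone, we get $\ell_t\notin\langle\{\ell_{t+1},\dots,\ell_r\}\rangle$. Starting from $t=r$ (where $\{\ell_r\}$ is independent, as $\ell_r$ is not a loop because $\pi(\ell_r)>\ell_r$) and decreasing $t$, we repeatedly use that adjoining to an independent set an element outside its closure preserves independence; this shows $\{\ell_t,\dots,\ell_r\}$ is independent for every $t$, hence $J$ is independent.

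\emph{$J$ spans $[i,j]$.} I would show by downward induction on $m\in\{j,j-1,\dots,i\}$ that $[m,j]\subseteq\langle J\cap[m,j]\rangle$. For $m=j$ this holds because $j\in J$ unless $j$ is a loop, in which case $j\in\langle\emptyset\rangle$. For the inductive step, $[m+1,j]\subseteq\langle J\cap[m+1,j]\rangle\subseteq\langle J\cap[m,j]\rangle$ by the hypothesis and monotonicity, so it suffices to show $m\in\langle J\cap[m,j]\rangle$. If $m\in J$ this is clear; otherwise $\pi(m)\le j$, and either $\pi(m)=m$, so $m$ is a loop and $m\in\langle\emptyset\rangle$, or $m<\pi(m)=:m'\le j$, in which case the definition of $\pi$ gives $m\in\langle[m+1,\dots,m']\rangle\subseteq\langle[m+1,\dots,j]\rangle\subseteq\langle\langle J\cap[m+1,j]\rangle\rangle=\langle J\cap[m+1,j]\rangle\subseteq\langle J\cap[m,j]\rangle$, using monotonicity and idempotence of closure together with the inductive hypothesis. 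Taking $m=i$ yields $[i,j]\subseteq\langle J\rangle$. Combined with independence, $J$ is a basis of $\mP|_{[i,j]}$, which proves (2); part (1) is the case $i=1$, $j=n$. (Alternatively, both statements follow by specializing the rank formula of \cite[Theorem 3]{rankfunction2020} to cyclic intervals, where the general expression collapses to a single count.) The only delicate point is the bookkeeping of integer lifts for cyclic — possibly wrapping — intervals, together with the choice of induction direction: the spanning half must be run from $j$ downward, so that when $\pi(m)=m'$ the interval $[m+1,\dots,m']$ stays inside $[i,j]$ and the inductive hypothesis applies.
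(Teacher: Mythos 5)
Your proof is correct, and it takes a genuinely different route from the paper: the paper does not argue the proposition directly but obtains it by specializing the general rank formula of \cite[Theorem 3]{rankfunction2020} (which computes the rank of an arbitrary subset from the bounded affine permutation) to the case of cyclic intervals, exactly the alternative you mention parenthetically at the end. Your argument is instead self-contained and elementary: you exhibit the set $J=\{\ell\in[i,j]\mid\pi(\ell)>j\}$ as an explicit basis of the restriction $\mP|_{[i,j]}$, proving independence by decreasing induction from the minimality in the definition of $\pi$ (each $\ell_t$ lies outside the closure of the later elements) and spanning by downward induction on the left endpoint. The two delicate points you flag --- fixing an integer lift $i\le j\le i+n-1$ so that the intervals $[\ell+1,\dots,j]$ never wrap around the whole ground set, and running the spanning induction from $j$ downward so that $[m+1,\pi(m)]$ stays inside $[i,j]$ --- are handled correctly. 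What your approach buys is independence from the external reference and an explicit basis witnessing the rank; what the paper's approach buys is brevity and the fact that the cited theorem covers arbitrary subsets, not just cyclic intervals.
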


Another way to combinatorially verify that a matroid is a positroid is through its associated polytope.
\begin{definition}[Matroid polytope]
    Let $\mM=([n],\mB)$ be a matroid. The \emph{matroid polytope} of $\mM$ is
    \[ P_{\mM} = \conv(e_B \mid B\in \mB)\subseteq \RR^n \]
    where $e_B$ is the indicator vector of $B$, i.e. $e_B = \sum_{i\in B} e_i$, with $e_i$ vectors of the standard basis of $\RR^n$.
\end{definition}

Then the following characterization holds.
\begin{proposition}[\protect{\cite[Proposition~5.6]{ardila2016positroids}}] \label{prop : positroid polytope}
    A matroid $\mM$ is a positroid if and only if its matroid polytope $P_{\mM}$ can be described by the equality $x_1+\dots+x_n=k$ and by inequalities of the form
    \[ \sum_{\ell \in [i,j]} x_\ell \leq \rank([i,j]) \quad \text{ for } i,j\in [n].\]
\end{proposition}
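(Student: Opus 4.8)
The plan is to compare the claimed description with the general (Edmonds) facet description of a matroid polytope and then to translate between matroids and their Grassmann necklaces. Recall that for any matroid $\mM=([n],\mB)$ of rank $k$,
\[
P_{\mM}=\Bigl\{\,x\in\RR^n:\ x_\ell\ge 0\ \text{ for all }\ell,\ \ \sum_{\ell=1}^{n}x_\ell=k,\ \ \sum_{\ell\in S}x_\ell\le\rank(S)\ \text{ for all }S\subseteq[n]\,\Bigr\},
\]
and that the inequalities $x_\ell\ge 0$ are already consequences of the equality together with the cyclic-interval inequality for the complement of $\{\ell\}$ (since $\rank([n]\setminus\{\ell\})\le k$). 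So $P_{\mM}$ always admits a description of the stated shape once one allows \emph{all} subsets $S$; the proposition asserts that, precisely for positroids, the $S$ that are needed can all be taken to be cyclic intervals. I will repeatedly use the following observation: for any matroid $\mM$ and any starting point $i$, if $I_i$ denotes the $<_i$-Gale-minimal basis of $\mM$, then every basis $B$ satisfies $B\ge_{<_i}I_i$ and hence meets each $<_i$-prefix in at most $\#(I_i\cap[i,j])$ elements; since $I_i$ itself attains this, $\rank([i,j])=\#(I_i\cap[i,j])$ for every $j$.

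For the forward direction, let $\mP$ be a positroid and $\mI=(I_1,\dots,I_n)$ its Grassmann necklace, $I_i$ being the $<_i$-Gale-minimal basis of $\mP$. By Oh's description of positroids, $\mP$ is the intersection of the Schubert (shifted) matroids $\mathrm{SM}(I_i)$, and this identity passes to polytopes, $P_{\mP}=\bigcap_{i=1}^{n}P_{\mathrm{SM}(I_i)}$ (the right-hand intersection being integral, its vertices are the common bases). Now, inside the hyperplane $\sum_\ell x_\ell=k$, each $P_{\mathrm{SM}(I_i)}$ is cut out by nonnegativity and the prefix inequalities $\sum_{\ell\le_i t}x_\ell\le\#\bigl(I_i\cap\{s:s\le_i t\}\bigr)$ for $t\in[n]$. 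But a $<_i$-prefix $\{s:s\le_i t\}$ is exactly a cyclic interval $[i,j]$, and its right-hand side equals $\#(I_i\cap[i,j])=\rank([i,j])$ by the observation above. Intersecting over all $i$ therefore exhibits $P_{\mP}$ as cut out by the equality $\sum_\ell x_\ell=k$ and the inequalities $\sum_{\ell\in[i,j]}x_\ell\le\rank([i,j])$.

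For the converse, suppose $P_{\mM}$ is cut out by $\sum_\ell x_\ell=k$ and the inequalities $\sum_{\ell\in[i,j]}x_\ell\le\rank([i,j])$. The tuple $\mI=(I_1,\dots,I_n)$ of $<_i$-Gale-minimal bases of $\mM$ is a Grassmann necklace (this holds for any matroid), so by Postnikov it is the necklace of some positroid $\mP:=\mP_{\mI}$. Since $\mI$ is the necklace of both $\mM$ and $\mP$, the observation above gives $\rank_{\mM}([i,j])=\#(I_i\cap[i,j])=\rank_{\mP}([i,j])$ for every cyclic interval. By the forward direction, $P_{\mP}$ is cut out by the cyclic-interval inequalities with right-hand sides $\rank_{\mP}([i,j])$, i.e.\ by exactly the same inequalities that cut out $P_{\mM}$; hence $P_{\mP}=P_{\mM}$, and since a matroid is recovered from its polytope, $\mM=\mP$ is a positroid.

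The step I expect to be the main obstacle is the structural input in the forward direction: that the matroid-level identity $\mP=\bigcap_i\mathrm{SM}(I_i)$ lifts to the polytope identity $P_{\mP}=\bigcap_i P_{\mathrm{SM}(I_i)}$, i.e.\ that this intersection of Schubert-matroid polytopes has no extra (fractional) vertices. Making this precise requires an integrality argument — via polymatroid intersection, or by observing that these particular Schubert-matroid polytopes are ``compatible'' generalized permutohedra — and it is essentially equivalent to the claim that a positroid polytope needs no facet outside the cyclic-interval family. Once this ingredient is available, the converse is purely formal, using only that every Grassmann necklace is realized by a positroid and that the cyclic-interval ranks of a matroid are read off from its necklace.
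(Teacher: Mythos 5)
The paper does not prove this proposition: it is imported verbatim from \cite[Proposition~5.6]{ardila2016positroids} and used as a black box, so there is no internal proof to compare against; I am judging your argument on its own terms. Your converse direction is correct and economical (the Gale-minimal bases of any matroid form a Grassmann necklace, the necklace determines all cyclic-interval ranks via the greedy identity $\rank([i,j])=|I_i\cap[i,j]|$, and a matroid is recovered from its polytope), and your preliminary observations are sound as well: nonnegativity does follow from the inequality for the cyclic interval $[\ell+1,\ell-1]$, and each $P_{\mathrm{SM}(I_i)}$ is indeed cut out by the $<_i$-prefix inequalities (this can be justified by total unimodularity of the consecutive-ones constraint matrix).

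The genuine gap is the one you flag yourself, and it is not a technical loose end but the entire content of the forward direction. Once each $P_{\mathrm{SM}(I_i)}$ is known to be cut out inside $\{\sum_\ell x_\ell=k\}$ by the inequalities $\sum_{\ell\in[i,j]}x_\ell\le|I_i\cap[i,j]|=\rank([i,j])$, the set $\bigcap_{i=1}^{n}P_{\mathrm{SM}(I_i)}$ is by definition the polytope appearing in the statement; so the identity $P_{\mP}=\bigcap_{i}P_{\mathrm{SM}(I_i)}$ is not merely ``essentially equivalent'' to the forward direction --- it \emph{is} the forward direction, restated. As written, the argument is therefore circular at its crux. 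Nor can the missing integrality be imported from general theory: Edmonds' (poly)matroid intersection gives integrality for the common independent-set or common base polytope of \emph{two} matroids, but an intersection of three or more matroid base polytopes sharing their $0/1$ vertices need not be integral, so ``via polymatroid intersection'' does not close the gap. One genuinely needs an argument specific to this family of cyclically shifted Schubert matroids --- showing the intersection acquires no fractional vertices --- which is exactly what the cited source supplies. Until that claim is proved, the forward implication, and hence also your converse (which invokes it), remains unestablished.
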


\section{Essential sets and positroids} \label{section : ess set and permutation}

In this section, we will introduce the notion of essential sets of a bounded affine permutation and study their relations to the positroid associated to the same bounded affine permutation.

\subsection{Essential sets and bounded affine permutations}

\begin{notation}\label{notation}
   Consider a $k \times n$ array of squares in the plane. We will always consider the rows modulo $k$, i.e. like a matrix vertically infinitely repeating. The square at coordinates $(i,j)$ corresponds to the square contained in row $i$ and column $j$ of the array. Let $R_i$ be the $i$-th row of the array and $A_{(i,j)}$ be the antidiagonal containing the square $(i,j)$. The \emph{sub-antidiagonal} $\Delta_{(i,j)}$ of the square $(i,j)$ is the set of squares in the array contained in the antidiagonal starting at the square $(i+1,j-1)$ and ending in the first column, with rows considered modulo $k$. Formally, it is defined as:
    \[ \Delta_{(i,j)} = \{ (i+\ell \mod k, j-\ell) \mid 0< \ell < j \}. \]
   If $(i,j) = (i, 1)$, we have $\Delta_{(i,j)} = \emptyset$.
   For a square $(i,j)$, we denote by $T_{(i,j)}$ the set of squares contained in the triangle defined by the horizontal segment on the left of $(i,j)$ and the sub-antidiagonal $\Delta_{(i,j)}$. We denote by $P_{(i,j)}$ the set of squares to the right of the sub-antidiagonal $\Delta_{(i,j)}$ and contained between the horizontal segment starting to the right of $(i,j)$ and the row starting at the end of $\Delta_{(i,j)}$. See \Cref{fig : notation array}.
\end{notation}

\begin{figure}
    \centering
    \input{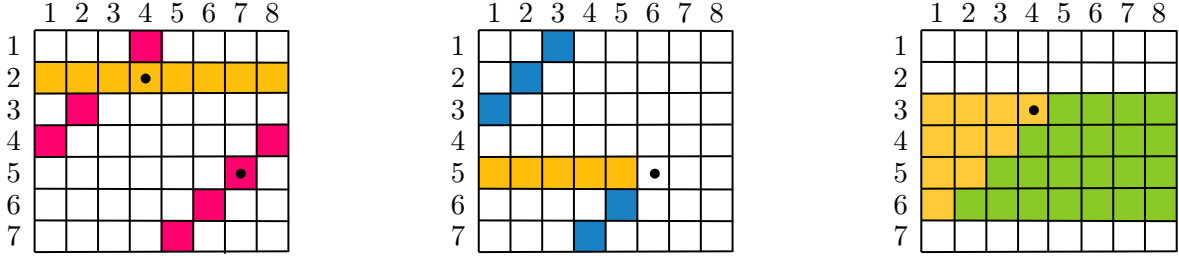}
    \caption{(Left) Yellow represents row $R_2$, and pink the  antidiagonal $A_{(5,7)}$ passing through $(5,7)$. (Center) Yellow indicates squares on the same row, strictly to the left of $(5,6)$, while blue signifies the sub-antidiagonal $\Delta_{(5,6)}$. (Right) Yellow depicts $T_{(3,4)}$, and green the set of squares $P_{(3,4)}$. See Notation~\ref{notation}.}
    \label{fig : notation array}
\end{figure}

Let $\pi$ be a bounded affine permutation of size $n$. We associate to it a dotted array as follows: consider an $n \times (n+1)$ array of squares in the plane. For each $i \in [n]$, place a dot inside the square $(i, \pi(i)-(i-1))$, while all other squares remain white. This is well-defined since $i\leq \pi(i) \leq i+n$. We denote by $D(\pi)$ the set of squares in the dotted array of $\pi$ containing a dot, i.e. $D(\pi) = \{ (i, \pi(i)-i+1) \mid i\in [n] \}$.

\begin{definition}[Diagram]
   The \emph{diagram of the bounded affine permutation} $\pi$ is constructed from the dotted array by shading. Specifically, for every dot in the array, the squares on the same row to the strict left of the dot and in its sub-antidiagonal are shaded. The diagram of $\pi$ consists of the remaining white squares.
\end{definition}

A white square is called a \emph{corner} if no {other} white square shares its top-right corner.
The \emph{essential family} $\mE(\pi)$ of $\pi$ is defined as the set of cyclic intervals $[i,j]$ for each white corner $(i,j-i+1)$ in the diagram of $\pi$. Each element in $\mE(\pi)$ is referred to as an \emph{essential set}. 
Equivalently:
\begin{align}
    \mE(\pi) &= \{ [i,j] \mid (i,j-i+1) \text{ is a corner of the diagram of } \pi \} \nonumber \\
    &= \{ [i,j] \mid i\in[n], i\leq j \leq i+n, \pi(i)\leq j, \, \pi^{-1}(j)\leq_i j, \, \pi(i-1) > j, \, j+1<_i \pi^{-1}(j+1) \}. \label{def : ess set}
\end{align}

\begin{example}\label{ex : diagram}
    Consider the bounded affine permutation of size $8$, defined by $\pi = (3 \; 4 \; 8 \; 7 \; 6 \; 9 \; 10 \; 13)$. The diagram of the permutation is depicted in \Cref{fig: example diagram} (left).
    \begin{figure}
        \centering
        \input{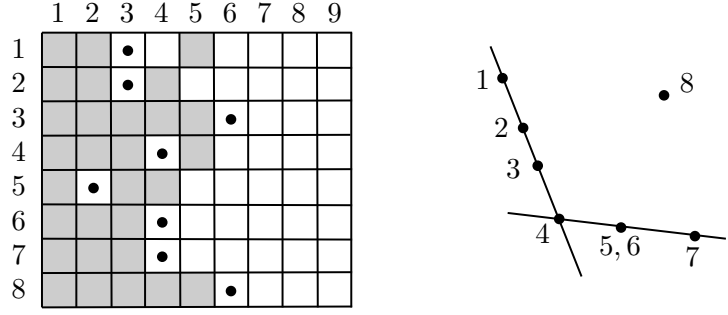}
        \caption{The diagram and the point-line configuration corresponding to $\pi$ from \Cref{ex : diagram}}
        \label{fig: example diagram}
    \end{figure}
    The corners of the diagram are $(1,4), (4,4),(5,2)$. Hence, the essential family is given by
    \[ \mE(\pi) = \{ [1,4], [4,7], [5,6] \}. \]
The positroid corresponding to $\pi$ has rank $3$ and can be represented by the point-line configuration in \Cref{fig: example diagram} (right). Note that the essential family captures the sets lying on a line and the parallel elements. However, to distinguish between them, we need to know their ranks.\end{example}

The properties of positroids that can be read through their associated bounded affine permutations can also be observed in their diagrams.
\begin{proposition}\label{prop : rank dots ess sets}
  Let $\pi$ be a bounded affine permutation of size $n$, and $\mathcal{P}$ its associated positroid.
  Then:
    \begin{enumerate}
        \item The rank of $\mP$ is equal to the number of dots in $P_{(1,n)}$.
        \item The rank of a cyclic interval $[i,j]\subseteq [n]$ is given by the number of dots in $P_{(i,j-i+1)}$.
    \end{enumerate}
\end{proposition}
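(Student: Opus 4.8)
The plan is to reduce Proposition~\ref{prop : rank dots ess sets} to Proposition~\ref{prop : properties positroid permutations} by matching the two combinatorial descriptions of rank. Part~(1) is the special case $[i,j]=[1,n]$ of part~(2), so it suffices to prove~(2). By Proposition~\ref{prop : properties positroid permutations}(2), the rank of the cyclic interval $[i,j]\subseteq[n]$ equals $\#\{\ell\in[i,j] : \pi(\ell) >_? j\}$, where the comparison must be interpreted cyclically with $i$ as the smallest element; more precisely, since $[i,j]$ is a cyclic interval read starting from $i$, the condition is that $\ell\in[i,j]$ and $\pi(\ell)$, viewed as an integer $\geq \ell$, is ``past $j$'' in the sense that the cyclic interval $[i+1,\ldots,\pi(\ell)]$ is not contained in $[i,j]$. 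So the whole proof amounts to showing that the set $P_{(i,j-i+1)}$ of squares, intersected with the dot set $D(\pi)$, is in bijection with exactly those indices $\ell$.

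The key step is to translate the geometric description of $P_{(i,j)}$ from Notation~\ref{notation} into an arithmetic condition on $\ell$ and $\pi(\ell)$, using the convention that the dot for index $\ell$ sits in square $(\ell,\pi(\ell)-\ell+1)$ of the $n\times(n+1)$ array, with rows taken modulo $n$. First I would recall that $P_{(i,j-i+1)}$ consists of the squares that are (a) to the right of the sub-antidiagonal $\Delta_{(i,j-i+1)}$, (b) weakly below the horizontal segment beginning immediately to the right of $(i,j-i+1)$, and (c) weakly above the row at which $\Delta_{(i,j-i+1)}$ terminates. Unwinding Notation~\ref{notation}: the sub-antidiagonal through $(i,j-i+1)$ starts at $(i+1,j-i)$ and steps down-left to column $1$; hitting column $1$ after $j-i$ steps lands in row $i+(j-i)=j \pmod n$. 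Thus condition~(c) says the square lies in rows $i,i+1,\ldots,j$ (cyclically, i.e.\ those rows $\ell$ with $i\le_i \ell \le_i j$, which is exactly $\ell\in[i,j]$), and condition~(a) says that in row $\ell$ the dot's column is at least the column on $\Delta_{(i,j-i+1)}$ in that row, namely column $\ge (j-i+1)-(\ell - i) = j-\ell+1$; since the dot's column is $\pi(\ell)-\ell+1$, this is $\pi(\ell)-\ell+1 \ge j-\ell+1$, i.e.\ $\pi(\ell)\ge j+1$ — precisely ``$\pi(\ell)$ is past $j$'' under the cyclic reading. Finally I would check that condition~(b), together with the bound $\pi(\ell)\le \ell+n$, is automatically satisfied and does not further restrict the dots, so that $\#(P_{(i,j-i+1)}\cap D(\pi))$ equals $\#\{\ell\in[i,j]: \pi(\ell)\ge j+1 \text{ cyclically}\}$, which is the rank by Proposition~\ref{prop : properties positroid permutations}(2). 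For part~(1), the interval $[1,n]$ gives $i=1$, $j=n$, $P_{(1,n)}$ is the region described, and the condition becomes $\pi(\ell)>n$, matching Proposition~\ref{prop : properties positroid permutations}(1).

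The main obstacle I anticipate is bookkeeping the modular/cyclic conventions precisely: the rows of the array are taken modulo $n$, $P_{(i,j-i+1)}$ wraps around the array when $j<i$ cyclically (e.g.\ $[7,2]$ when $n=8$), and the comparison ``$\pi(\ell)>j$'' in Proposition~\ref{prop : properties positroid permutations}(2) is really a statement about representatives in a shifted order. I would handle the wrap-around by working with the infinite strip picture: think of the array as vertically $n$-periodic, lift $[i,j]$ to the genuine integer interval $\{i,i+1,\ldots,j'\}$ where $j'\equiv j$ and $i\le j'\le i+n-1$, and lift each dot consistently; then the region $P_{(i,j-i+1)}$ becomes a genuine (non-wrapping) parallelogram-shaped region in the strip and all comparisons are ordinary integer comparisons, with $i\le \pi(\ell)\le \ell+n$ guaranteeing the relevant lifts land in the right window. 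Once set up this way, the identification in the previous paragraph is a direct check of three inequalities, and I would present a small figure (referencing the conventions fixed in Notation~\ref{notation} and illustrated in Figure~\ref{fig : notation array}) to make the region $P_{(i,j-i+1)}$ and the dot positions visually unambiguous.
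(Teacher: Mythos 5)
Your proposal is correct and takes essentially the same approach as the paper, whose entire proof is to cite \Cref{prop : properties positroid permutations}(2) and observe that the indices $\ell\in[i,j]$ with $\pi(\ell)>j$ correspond exactly to the dots in $P_{(i,j-i+1)}$; you are simply supplying the coordinate bookkeeping that the paper leaves implicit. One cosmetic slip: ``to the right of the sub-antidiagonal'' is a strict condition, so the column inequality in row $\ell$ should read $>j-\ell+1$ rather than $\geq j-\ell+1$ --- that strict version is what actually yields your (correct) conclusion $\pi(\ell)\geq j+1$.
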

\begin{proof}
    From \Cref{prop : properties positroid permutations}, the rank of $[i,j]$ is the number of $\ell\in[i,j]$ such that $\pi(\ell)>j$. This is exactly the number of dots in $P_{(i,j)}$.
\end{proof}

\begin{definition}[Ranked essential family]
    Let $\pi$ be a bounded affine permutation of order $n$. We define the \emph{ranked essential family} $\mE^{\mathrm{r}}(\pi)$ of $\pi$ to be the family
    \begin{align*}
        \mE^{\mathrm{r}}(\pi) = \left\{ (r,[i,j]) \mid [i,j]\in \mE(\pi)\cup\{[1,n]\}, \; r = \text{ number of dots in } P_{(i,j-i+1)} \right\}.
    \end{align*}
\end{definition}

\begin{example}
    Consider the permutation from \Cref{ex : diagram}. Its ranked essential family is
    \[ \mE^{\mathrm{r}}(\pi) = \{ (1,[5,6]), (2, [1,4]), (2, [4,7]), (3, [1,8]) \}. \]
\end{example}

\begin{example}\label{ex : ess set uniform matroid}
    Let $\mU_{k,n}$ be the uniform matroid of rank $k$ on $n$ elements. The associated bounded affine permutation is defined by $\pi_{k,n}(i)=i+k$, for every $i\in \ZZ$. It is easy to see that the set of corners of its diagram is empty, hence $\mE^{\mathrm{r}}(\pi_{k,n}) = \{(k,[1,n])\}$.
\end{example}

\subsection{Essential sets and rank function}\label{subsection : rank and ess sets}

The ranked essential family can be interpreted as the rank information of {maximally dependent} cyclic intervals. Indeed, the essential sets can be constructed from the rank function of the positroid as follows.

\begin{theorem}\label{thm : ess set and rank matrix}
    Let $\mP$ be a positroid on %the ground set 
    $[n]$ of rank $k$ and $\mR$ the set of pairs $(r,[i,j])$, such that
    \begin{itemize}
        \item if $i\neq j$, then 
    $r=\rank([i,j]) = \rank([i+1,j]) = \rank([i,j-1]) = \rank([i-1,j])-1 = \rank([i,j+1])-1.$
    \item If $i=j$, then $0 = r = \rank([i,i]) = \rank([i-1,i])-1 = \rank([i,i+1])-1$.
    \end{itemize}
    Then $\mE^{\mathrm{r}}=\mR \cup \{(k,[1,n])\}$.
\end{theorem}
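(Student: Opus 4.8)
The plan is to show the two sets $\mE^{\mathrm{r}}(\pi)$ and $\mR\cup\{(k,[1,n])\}$ coincide by translating the combinatorial characterization of corners in \eqref{def : ess set} into rank statements via \Cref{prop : rank dots ess sets}. Since both sides automatically contain $(k,[1,n])$ — the left by definition of $\mE^{\mathrm r}$, and on the right because $\rank([1,n])=k$ — it suffices to prove that for $[i,j]$ with $i\le j\le i+n$ the pair $(r,[i,j])$ lies in $\mE^{\mathrm r}(\pi)$, i.e. $(i,j-i+1)$ is a corner and $r$ is the number of dots in $P_{(i,j-i+1)}$, if and only if $(r,[i,j])$ satisfies the rank equalities defining $\mR$. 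By \Cref{prop : rank dots ess sets}(2), the number of dots in $P_{(i,j-i+1)}$ is exactly $\rank([i,j])$, so the equality $r=\rank([i,j])$ is common to both descriptions and the whole proof reduces to showing: $(i,j-i+1)$ is a corner of the diagram $\iff$ the four (or, in the degenerate case $i=j$, two) rank-equalities among neighbouring cyclic intervals hold.

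**The four conditions to match up.** The corner condition from \eqref{def : ess set} is the conjunction of $\pi(i)\le j$, $\pi^{-1}(j)\le_i j$, $\pi(i-1)>j$, and $j+1<_i\pi^{-1}(j+1)$. I would treat these one at a time, each time using \Cref{prop : properties positroid permutations}(2) to rewrite a rank of a cyclic interval as a count of indices $\ell$ with $\pi(\ell)$ large. The key bookkeeping identity is that passing from $[i,j]$ to $[i,j+1]$ changes the rank by $0$ or $1$ according to whether $\pi^{-1}(j+1)$ lies in $[i,j+1]$ or not — equivalently whether $j+1<_i \pi^{-1}(j+1)$ (rank unchanged) or not (rank up by one) — which matches $\rank([i,j])=\rank([i,j+1])-1$ with the fourth corner condition. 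Symmetrically, removing the endpoint $i$ (passing from $[i,j]$ to $[i+1,j]$) leaves the rank unchanged precisely when $\pi(i)\le j$, matching the first corner condition with $\rank([i,j])=\rank([i+1,j])$; prepending $i-1$ (passing to $[i-1,j]$) increases the rank by one precisely when $\pi(i-1)>j$, matching $\rank([i-1,j])=\rank([i,j])+1$ with the third condition; and removing the endpoint $j$ (passing to $[i,j-1]$) leaves the rank unchanged precisely when $\pi^{-1}(j)\in[i,j-1]\cup\{j\}$ in the $<_i$ order, i.e. $\pi^{-1}(j)\le_i j$, matching $\rank([i,j])=\rank([i,j-1])$ with the second condition. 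Each of these four equivalences is a short direct computation from \Cref{prop : properties positroid permutations}(2), so I would state them as a sequence of small claims (or a single lemma with four bullet points) and then simply assemble them.

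**The degenerate case and loops.** When $i=j$ the cyclic interval is the singleton $\{i\}$ and $P_{(i,1)}$ is the empty set, so automatically $r=0$; here a corner of the diagram in row $i$ at column $1$ means $i$ is a loop, i.e. $\pi(i)=i$, and one checks $\pi(i)=i\iff \rank(\{i\})=0$, while the two remaining conditions $\rank([i-1,i])=1$ and $\rank([i,i+1])=1$ follow from the same two of the four equivalences above (the "$\pi(i-1)>i$" and "$i+1<_i\pi^{-1}(i+1)$" conditions), which hold automatically once $i$ is a loop because then $\pi(i-1)>i$ unless $i-1$ is also involved — this needs a brief argument that a loop forces the neighbouring ranks to jump. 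I would handle this as a special case at the end.

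**Main obstacle.** The genuinely delicate point is the careful handling of the cyclic order: the conditions in \eqref{def : ess set} use $<_i$ rather than the ordinary order, and $j$ ranges over $i\le j\le i+n$ (so representatives, not elements of $[n]$, must be tracked), which means the translation "rank changes by $0$ or $1$ when we extend the interval" has to be verified with the right modular representative and with attention to the boundary cases $j=i+n$ and $j=i$. Getting these off-by-one/mod-$n$ issues exactly right — in particular confirming that the fourth equality $\rank([i,j+1])-1=\rank([i,j])$ is never vacuous or ill-posed for the relevant range — is where the real work lies; everything else is a mechanical unwinding of \Cref{prop : properties positroid permutations}.
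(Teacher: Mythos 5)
Your proposal is correct and follows essentially the same route as the paper: both proofs translate the four corner conditions of \eqref{def : ess set} into the four rank equalities by counting, for each neighbouring cyclic interval, the indices $\ell$ with $\pi(\ell)$ exceeding the right endpoint (the paper phrases this as counting dots in the regions $P_{(i,j-i+1)}$, which is the same computation), with the same case split on whether $j$ or $j+1$ is a loop. Organizing the argument as four biconditionals rather than two separate inclusions is only a cosmetic difference.
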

\begin{proof}
    Let $\pi$ be the bounded affine permutation associated to the positroid $\mP$. Let $(r,[i,j])\in \mR$. 
    Since $\rank([i,j])=\rank([i+1,j])$, we have $\pi(i)\leq j$. Similarly, since $\rank([i-1,j])=\rank([i,j])+1$ and $i-1\not\in \langle i,\dots, j\rangle$, we have $\pi(i-1)>j$.
    It remains to prove that $\pi^{-1}(j)\leq_i j$ and $\pi^{-1}(j+1)>_i j+1$. The condition $\rank([i,j])=\rank([i,j-1])$ is equivalent to the fact that the number of dots in $P_{(i,j-i+1)}$ and in $P_{(i,j-i)}$ is the same. Then either $\pi(j)=j$ or $\pi(j)>j$. In the latter case, $P_{(i,j-i+1)}$ contains a dot in the last row, which is not part of $P_{(i,j-i)}$. Hence there must be a dot in $P_{(i,j-i)}\setminus P_{(i,j-i+1)}$, i.e. $\pi^{-1}(j)<_i j$. It follows that $\pi^{-1}(j)\leq_i j$. Finally, $\rank([i,j])=\rank([i,j+1])-1$ implies that the number of dots in $P_{(i,j-i+2)}$ is one more than the number of dots in $P_{(i,j-i+1)}$. Note that $j+1$ is not a loop, hence $\pi(j+1)>j+1$, and $P_{(i,j-i+2)}$ contains a dot in the row $j+1$, which is not in $P_{(i,j-i+1)}$. For the equality to hold there must be no dot in $P_{(i,j-i+1)}\setminus P_{(i,j-i+2)}$, i.e. $\pi^{-1}(j+1)>_i j+1$. It follows that $[i,j]$ is an essential set of $\pi$.

\medskip

    Conversely, let $(r,[i,j])\in \mE^{\mathrm{r}}\setminus \{(k,[1,n])\}$. By definition, $r=\rank([i,j])$ is the number of dots in $P_{(i,j-i+1)}$. Moreover, $\rank([i+1,j])$ is equal to the number of dots in $P_{(i+1,j-i)}$, which is equal to the number of dots in $P_{(i,j-i+1)}$ since there is no dot in the first row of this region by definition of essential set. Therefore, $\rank([i+1,j])=r$. In the same way, since $\pi(i-1)>j$, the number of dots in $P_{(i-1,j-i+2)}$ is equal to one more than the dots in $P_{(i,j-i+1)}$, hence $\rank([i-1,j])=r+1$. Since $\pi^{-1}(j)\in[i,j]$, if $\pi^{-1}(j)=j$, then $j$ is a loop and $\rank([i,j-1])=\rank([i,j])$. If $\pi^{-1}(j)\in [i,j-1]$, then there is a dot in $P_{(i,j-i)}\setminus P_{(i,j-i+1)}$ as well as one dot in $P_{(i,j-i+1)}\setminus P_{(i,j-i)}$, hence $\rank([i,j-1])=\rank([i,j])$. Finally, since $\pi^{-1}(j+1)<i$, the squares in the left-most antidiagonal intersecting $P_{(i,j-i+1)}$ do not contain a dot. Hence, $\rank([i,j+1])$ is given by the number of dots in $P_{(i,j-i+1)}$ plus the number of dots contained in the last row of $P_{(i,j-i+2)}$, i.e. $\rank([i,j+1])=r+1$. It follows that $[i,j]$ satisfies the conditions in \cref{def : ess set}, i.e. $(r,[i,j])\in\mE^{\mathrm{r}}$.
\end{proof}

The rank of a positroid on any cyclic interval can be computed via its ranked essential family as follows. 

\begin{theorem}\label{thm : rank cyclic intervals}
     Let $\mP$ be a positroid and $\mE^{\mathrm{r}}$ its ranked essential family.~For any $[i,j]\subseteq [n]$, we have:
    \[ \rank([i,j]) = \min \{ r+|[i,j]\setminus I| \mid (r,I)\in \mE^{\mathrm{r}}\cup \{(0, \emptyset)\} \}. \]
\end{theorem}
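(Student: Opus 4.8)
The plan is to prove the two inequalities separately. The bound $\rank([i,j])\le\min\{\,\cdot\,\}$ is elementary: every $(r,I)\in\mE^{\mathrm{r}}\cup\{(0,\emptyset)\}$ satisfies $r=\rank(I)$ (trivially when $I=\emptyset$, and by \Cref{prop : rank dots ess sets} and the definition of $\mE^{\mathrm{r}}$ otherwise), so splitting $[i,j]=(I\cap[i,j])\cup([i,j]\setminus I)$ and using monotonicity, submodularity and $\rank(X)\le|X|$ of the matroid rank function gives $\rank([i,j])\le\rank(I\cap[i,j])+|[i,j]\setminus I|\le r+|[i,j]\setminus I|$. Taking the minimum proves one inequality, and reduces the theorem to producing a \emph{single} pair $(r,I)\in\mE^{\mathrm{r}}\cup\{(0,\emptyset)\}$ with $r+|[i,j]\setminus I|=\rank([i,j])$.

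For the reverse bound I would use strong induction on the nonnegative integer $\mu([i,j]):=2\rank([i,j])-|[i,j]|+n$. If $[i,j]=\emptyset$ there is nothing to prove, and if $[i,j]$ is an essential set or equals $[1,n]$, then $\bigl(\rank([i,j]),[i,j]\bigr)\in\mE^{\mathrm{r}}$ is itself the required witness. Otherwise I claim $[i,j]$ admits a \emph{reduction move}: either (a) an endpoint of $[i,j]$ is a coloop of $[i,j]$, i.e. one of $\rank([i+1,j]),\rank([i,j-1])$ equals $\rank([i,j])-1$; or (b) $|[i,j]|<n$ and adjoining $i-1$ or $j+1$ does not raise the rank, i.e. one of $\rank([i-1,j]),\rank([i,j+1])$ equals $\rank([i,j])$. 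Indeed, the rank of a cyclic interval and of the same interval with one endpoint moved differ by at most one, so if no reduction move exists then $\rank([i+1,j])=\rank([i,j-1])=\rank([i,j])$ and (when $|[i,j]|<n$) $\rank([i-1,j])=\rank([i,j+1])=\rank([i,j])+1$; these are exactly the conditions defining the family $\mR$ of \Cref{thm : ess set and rank matrix} (the one-element case, in which $[i,j]$ is forced to be a loop, being the $i=j$ clause), so $[i,j]$ would be an essential set, a contradiction; and the remaining possibility $|[i,j]|=n$ forces $[i,j]=[1,n]$, also excluded.

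Now pick a reduction move and let $J$ be the resulting cyclic interval: $J=[i+1,j]$ or $[i,j-1]$ (possibly $\emptyset$) in case (a), and $J=[i-1,j]$ or $[i,j+1]$ in case (b). A direct check gives $\mu(J)=\mu([i,j])-1$ in all cases — in (a) both $\rank$ and cardinality decrease by one, in (b) $\rank$ is unchanged while the cardinality increases by one — so the induction hypothesis furnishes $(r,I)\in\mE^{\mathrm{r}}\cup\{(0,\emptyset)\}$ with $r+|J\setminus I|=\rank(J)$. It remains to transfer this to $[i,j]$. In case (b) we have $[i,j]\subseteq J$ and $\rank(J)=\rank([i,j])$, hence $r+|[i,j]\setminus I|\le r+|J\setminus I|=\rank([i,j])$, which, together with the inequality already proved, is an equality. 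In case (a), say $J=[i+1,j]$, we have $[i,j]=\{i\}\sqcup J$, so $|[i,j]\setminus I|$ equals $|J\setminus I|$ or $|J\setminus I|+1$ according to whether $i\in I$; therefore $r+|[i,j]\setminus I|$ equals $\rank([i,j])-1$ or $\rank([i,j])$, and the first value contradicts $\rank([i,j])\le r+|[i,j]\setminus I|$, so the second holds. This closes the induction.

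\textbf{Where the work is.} The main obstacle is the claim that a cyclic interval with no available reduction move must be an essential set (or $[1,n]$): this is exactly the place where \Cref{thm : ess set and rank matrix} is invoked, and the degenerate cases — a single loop, and $[1,n]$ when one of $1,\dots,n$ is a coloop — need to be checked separately. The remaining ingredients (that $\mu$ strictly decreases along a reduction move, and that the witness for $J$ transfers to $[i,j]$) are short computations resting only on the already-established inequality $\le$.
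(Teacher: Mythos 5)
Your proposal is correct, and it takes a genuinely different route from the paper. The paper fixes the pair $(r,I)$ achieving the minimum $m_{[i,j]}$, assumes $\rank([i,j])<m_{[i,j]}$, and derives a contradiction by locating a circuit $C\subseteq[i,j]$, passing to the cyclic interval $[a,b]$ spanned by $C$, and invoking \Cref{thm : ess set and rank matrix} to produce an essential set containing $[a,b]$ that beats the supposed minimizer; this requires a somewhat delicate case analysis according to whether the minimizer is $(0,\emptyset)$, whether $[i,j]\setminus I\neq\emptyset$, and whether $[i,j]\subseteq I$. You instead prove the easy inequality $\rank([i,j])\le r+|[i,j]\setminus I|$ once and for all by submodularity, and then exhibit a witness attaining it by a well-founded induction on the potential $2\rank([i,j])-|[i,j]|+n$, where the dichotomy ``either a reduction move exists or $[i,j]$ is an essential set'' is exactly the $\mR\subseteq\mE^{\mathrm{r}}$ half of \Cref{thm : ess set and rank matrix} (which both proofs depend on, so there is no circularity). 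Your argument buys a cleaner, essentially algorithmic attainment proof that never mentions circuits and whose edge cases (singleton loops, $J=\emptyset$, $[1,n]$) you correctly isolate; the paper's argument, in exchange, more directly ties the minimizing essential set to the circuit structure of $[i,j]$. All the individual steps you outline check out: the potential is a nonnegative integer that drops by exactly one under either type of move, and the transfer of the witness from $J$ back to $[i,j]$ in case (a) correctly uses the already-established upper bound to rule out the value $\rank([i,j])-1$.
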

\begin{proof}
    Denote by $m_{[i,j]}$ the quantity on the right-hand side of the equality. Note that $\rank([i,j]) \leq m_{[i,j]}$.
    
    Suppose $m_{[i,j]} = |[i,j]|$ and suppose for contradiction that $\rank([i,j]) < m_{i,j} = |[i,j]|$. Then there exists a circuit $C\subseteq [i,j]$ (a minimally dependent set). Let $a= \min_{<_i}C$ and $b= \max_{<_i}C$ and note that $[a,b]$ is such that $\rank([a,b]) = \rank([a+1,b])=\rank([a,b-1])$. Then there exists an essential set $(r,I)$ with $r = \rank([a,b]) \leq |[a,b]|-1$ and $[a,b]\subseteq I$. Note that $r+|[i,j]\setminus I| \leq |[a,b]|-1 + |[i,j]\setminus [a,b]| < |[i,j] = m_{[i,j]}$. Hence, we get a contradiction and $\rank([i,j]) = m_{[i,j]} = |[i,j]|$.

    Suppose that $m_{[i,j]} = r+ |[i,j]\setminus I|$ for some essential set $(r,I)$. Again, suppose for contradiction that $\rank([i,j]) < m_{i,j}$. Suppose $[i,j]\setminus I \neq \emptyset$. Then, there exists a circuit $C\subseteq [i,j]$ with $C\not\subseteq I$. Let $a= \min_{<_i}C$ and $b= \max_{<_i}C$ and note that $[a,b]$ is such that $\rank([a,b]) = \rank([a+1,b])=\rank([a,b-1])$ and at least one index between $a$ and $b$ is not in $I$. Then, there exists an essential set $(r',J)$ with $[a,b]\cup I\subseteq J$ and $r'\leq r+|(J\cap[i,j])\setminus I| -1$. Moreover, we have that:
    \[ r'+|[i,j]\setminus J| \leq r + |(J\cap[i,j])\setminus I|-1 + |[i,j]\setminus J| = r +|[i,j]\setminus I|-1< m_{[i,j]}. \]
    Finally, suppose $[i,j] \subseteq I$ and that $\rank([i,j]) < m_{i,j} = r$. In particular, $r \leq |[i,j]|$. If $r= |[i,j]|$, then the same reasoning as in the first case works. Hence, assume that $r< |[i,j]|$. Let $F$ be the union of circuits contained in $[i,j]$, then $F\neq \emptyset$, since $\rk([i,j])<|[i,j]|$. Let $[a,b]$ be the smallest cyclic interval containing $F$. Since $\rk([a,b])= \rk([a+1,b]) = \rk([a,b-1])$, there exists an essential set $(r',J)$ with $r' = \rk([a,b])<|[a,b]|$ and $J\supseteq [a,b]$. Then $\rk([i,j]) = r' +|[i,j]\setminus J| < r= m_{[i,j]}$, contradicting the hypothesis of minimality on $(r,I)$. 
\end{proof}

Since a positroid is uniquely defined by its rank on every cyclic interval, it follows that the ranked essential family of a positroid uniquely determines the positroid.

\begin{definition}[Connected essential set]
An essential set $(r,[i,j])\in \mE^{\mathrm{r}}$ is said to be \emph{connected} if there exist no pairwise disjoint sets $(r_1,[i_1,j_1]),\dots, (r_m,[i_m,j_m])\in \mE^{\mathrm{r}}$ such that
\[ r= r_1 + \cdots + r_m + \left| [i,j]\setminus \bigcup_{a=1}^m [i_a,j_a] \right| \quad\text{and}\quad [i,j]\supseteq \bigcup_{a=1}^m [i_a,j_a].\]
%\[ r= r_1 + \dots + r_m\quad\text{and}\quad [i,j]= \bigcup_{a=1}^m [i_1,j_1].\]
We denote the set of connected ranked essential sets by~$\mathcal{C}\mE^{\mathrm{r}}$. 
\end{definition}

\begin{remark}\label{rmk : rank from connected}
    By definition of connected essential set, it follows that we can rewrite the rank formula from \Cref{thm : rank cyclic intervals} as
    \[\rk([i,j]) = \min\left\{ \sum_{a=1}^m r_a + \biggl|[i,j]\setminus \bigcup_{a=1}^m [i_a,j_a]  \biggr| \text{ such that } \{ (r_a,[i_a,j_a])\}_{a=1,\ldots,m}\subseteq \mC\mE^{\mathrm{r}} \cup \{(0,\emptyset)\}  \right\} \]
    for every cyclic interval $[i,j]$.
    In particular, the rank conditions contained in connected essential intervals are sufficient to reconstruct the positroid. We will show in \S\ref{subsection : minimal rank conditions} that we can use an even smaller family of minimal rank conditions.
\end{remark}

The rank conditions given by connected essential sets define the facets of the corresponding positroid polytope.

\begin{corollary}
    Let $\mP$ be a positroid and let $\mE^{\mathrm{r}}$ be its ranked essential family. Then the positroid polytope of $\mP$ is given by the facets:
    \[ Q_\mP = \left\{ x\in \RR^n \mid 0 \leq x_i \leq 1 \; \forall i\in [n], \; \sum_{i\in[n]}x_i = k, \; \sum_{\ell\in [i,j]} x_\ell \leq r \quad \forall (r,[i,j])\in \mC\mE^{\mathrm{r}} \right\}. \]
\end{corollary}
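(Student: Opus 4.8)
The plan is to first establish the set equality $P_\mP = Q_\mP$ and then to check that the inequalities indexed by $\mC\mE^{\mathrm{r}}$ are the facet-defining ones.

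The inclusion $P_\mP \subseteq Q_\mP$ is immediate from the vertices: each $e_B$ satisfies $(e_B)_i\in\{0,1\}$, $\sum_i(e_B)_i = |B| = k$, and $\sum_{\ell\in[i,j]}(e_B)_\ell = |B\cap[i,j]|\le\rank([i,j])$ for every cyclic interval, so every affine (in)equality defining $Q_\mP$ holds on the convex hull $P_\mP$, in particular those coming from connected essential sets. For $Q_\mP \subseteq P_\mP$ I would take $x\in Q_\mP$ and, by \Cref{prop : positroid polytope}, reduce to showing $\sum_{\ell\in[i,j]}x_\ell\le\rank([i,j])$ for an arbitrary cyclic interval $[i,j]$. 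Using \Cref{rmk : rank from connected}, pick $(r_1,[i_1,j_1]),\dots,(r_m,[i_m,j_m])\in\mC\mE^{\mathrm{r}}\cup\{(0,\emptyset)\}$ with $\rank([i,j]) = \sum_{a=1}^m r_a + \bigl|[i,j]\setminus\bigcup_a[i_a,j_a]\bigr|$. Splitting $[i,j]$ into the part it shares with $\bigcup_a[i_a,j_a]$ and the rest, and then using $x\ge 0$, the connected-essential inequalities $\sum_{\ell\in[i_a,j_a]}x_\ell\le r_a$, and $x_\ell\le 1$ on the remaining coordinates, gives
\[ \sum_{\ell\in[i,j]}x_\ell \;\le\; \sum_{a=1}^m\sum_{\ell\in[i_a,j_a]}x_\ell + \sum_{\ell\in[i,j]\setminus\bigcup_a[i_a,j_a]}x_\ell \;\le\; \sum_{a=1}^m r_a + \Bigl|[i,j]\setminus\bigcup_a[i_a,j_a]\Bigr| = \rank([i,j]). \]
Combined with $\sum_i x_i = k$ this shows $x\in P_\mP$, hence $P_\mP = Q_\mP$.

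To see that this is genuinely the facet description I would invoke the classical description of the facets of a matroid polytope: splitting $\mP$ into connected components turns $P_\mP$ into a product, and the facets of each factor are the box inequalities $x_e\ge 0$ (for non-coloops $e$) together with the ``flacet'' inequalities $\sum_{e\in F}x_e\le\rank(F)$ for flats $F$ with $\mP|_F$ and $\mP/F$ both connected (the inequalities $x_e\le 1$ and the parallel-class inequalities being the rank-one case). One then matches the two lists: the inequality of a cyclic interval $[i,j]$ equals, after discarding loops, the flacet inequality of $\langle[i,j]\rangle$, and $(\rank([i,j]),[i,j])$ is a connected essential set exactly when $\langle [i,j]\rangle$ is such a flat; conversely a non-connected essential set decomposes into smaller essential sets, and the displayed estimate shows its inequality is then a consequence of theirs, so it is not facet-defining. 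Since every inequality occurring in $Q_\mP$ is facet-defining, the description is irredundant and records precisely the facets.

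The hard part will be this final matching: converting the combinatorial ``connected essential set'' condition into the matroid-theoretic connectedness of $\mP|_{\langle[i,j]\rangle}$ and $\mP/\langle[i,j]\rangle$, while carefully handling loops and coloops, which can prevent $[i,j]$ from being a flat without altering the associated inequality. By comparison, the equality $P_\mP = Q_\mP$ is a quick consequence of \Cref{prop : positroid polytope} and \Cref{rmk : rank from connected}.
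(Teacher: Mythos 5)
Your first two paragraphs reproduce the paper's own proof almost verbatim: $P_\mP \subseteq Q_\mP$ is read off from the vertices $e_B$, and $Q_\mP \subseteq P_\mP$ follows by taking an arbitrary cyclic interval $[i,j]$, writing $\rank([i,j]) = \sum_a r_a + \bigl|[i,j]\setminus\bigcup_a [i_a,j_a]\bigr|$ via \Cref{rmk : rank from connected}, and adding up the inequalities for the disjoint connected essential sets together with $0\le x_\ell\le 1$ on the leftover coordinates. That part is correct and complete, and it is all the paper itself proves.

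Where you go beyond the paper is in the last two paragraphs: the paper's proof establishes only the set equality $P_\mP = Q_\mP$ and never verifies that the inequalities indexed by $\mC\mE^{\mathrm{r}}$ are individually facet-defining. Your plan to check this against the classical facet description of matroid polytopes (the flacet inequalities $\sum_{e\in F}x_e\le \rank(F)$ for flats $F$ with $\mP|_F$ and $\mP/F$ connected) is a sensible way to justify the word ``facets'' in the statement, but as written it is only a sketch: the translation between combinatorial connectedness of an essential set and matroid-theoretic connectedness of $\mP|_{\langle[i,j]\rangle}$ and $\mP/\langle[i,j]\rangle$, together with the loop and coloop bookkeeping, is precisely the step you defer, and it is not routine. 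So measured against the paper's own proof your argument is complete; measured against the stronger irredundancy claim you set out to establish, the final matching is an acknowledged but unclosed gap.
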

\begin{proof}
    Let $P_\mP$ be the associated positroid polytope. Note that $x\in P_\mP$ satisfies all the inequalities defining $Q_{\mP}$, hence $P_\mP \subseteq Q_\mP$. Let $x \in Q_\mP$ and $[i,j]\subseteq [n]$. By \Cref{rmk : rank from connected}, we have that $\rank([i,j]) = \sum_{a=1}^m r_a + \bigl|[i,j]\setminus \bigcup_{a=1}^m [i_a,j_a]  \bigr|$ for some pairwise disjoint intervals $(r_1,[i_1,j_1]),\dots, (r_m,[i_m,j_m])\in \mE^{\mathrm{r}}$. Then 
    \begin{align*}
        \sum_{\ell\in [i,j]} x_\ell = \sum_{a=1}^m\sum_{\ell \in [i,j]\cap [i_a,j_a]} x_\ell + \sum_{\ell \in [i,j]\setminus \bigcup_{a=1}^m [i_a,j_a]} x_\ell \leq \sum_{a=1}^m r_a + \biggl|[i,j]\setminus \bigcup_{a=1}^m [i_a,j_a]  \biggr|.
    \end{align*}
    Equivalently, we have $\sum_{\ell \in [i,j]} x_\ell \leq \rank([i,j])$. Hence $x \in P_\mP$, as desired.
\end{proof}

\subsection{Properties of essential sets}

The ranked essential family contains the non-trivial rank information of a positroid. The elements of this set satisfy compatibility conditions as follows.

\begin{theorem}\label{thm : prop ess sets}
    Let $\mE^{\mathrm{r}}$ be the ranked essential family of a positroid $\mP$  of rank $k$ on the ground set $[n]$. Then, the following holds.
    \begin{itemize}
        \item[\rm{(E1)}] $(k, [1,n])\in \mE^{\mathrm{r}}$ and for each $(r,[i,j])\in \mE^{\mathrm{r}}$, we have 
        \[
        |[i,j]| > r \geq 0\quad\text{and}\quad|[n]\setminus [i,j]| \geq k-r>0.
        \]
        \item[\rm{(E2)}]\label{E2} For every distinct pair $(r_1,[i_1,j_1]), (r_2,[i_2,j_2]) \in \mE^{\mathrm{r}}\setminus \{(k, [1,n])\}$ such that $[i_1,j_1] \subsetneq [i_2,j_2]$, we have
        \[ 0 < r_2-r_1 < |[i_2,j_2]\setminus [i_1,j_1]|. \]
        \item[\rm{(E3)}]\label{E3} Let $(r_1,[i_1,j_1]),(r_2,[i_2,j_2])\in \mE^{\mathrm{r}}\setminus \{(k, [1,n])\}$ such that $i_2 <_{i_1} j_2$ and $j_1 <_{i_1} j_2$.
        \begin{itemize}
            \item[{\rm(1)}] If $[i_1,j_1]\cap [i_2,j_2] = \emptyset$, i.e. $j_1 <_{i_1} i_2$ and $j_2<_{i_2} i_2$, let $(r_3,[i_3,j_3])$ be a minimal essential set containing $[i_1,j_2]$ and $(r_4, [i_4,j_4])$ a maximal essential set contained in $[j_1+1,i_2-1]$. If there is no such $(r_4, [i_4,j_4])$, then let it be $(0,\emptyset)$.
Then
            \[ r_1 + r_2 \geq r_3 - r_4 - |[j_1+1,i_2-1]\setminus [i_4,j_4]|. \]
            \item[{\rm(2)}] If $[i_1,j_1]\cap [i_2,j_2]$ contains $[i_2,j_1]$, i.e. $i_2 <_{i_1} j_1$, let $(r_3,[i_3,j_3])$ be a minimal essential set containing $[i_1,j_1]\cup [i_2,j_2]$ and $(r_4, [i_4,j_4])$ a maximal essential set contained in $[i_2,j_1]$.  If there is no such $(r_4, [i_4,j_4])$, then let it be $(0,\emptyset)$.
Then
            \[ r_1 + r_2 \geq r_3 + r_4 + |[i_2,j_1]\setminus [i_4,j_4]|. \]
        \end{itemize}
    \end{itemize}
\end{theorem}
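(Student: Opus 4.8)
The plan is to reduce all three statements to the rank function via \Cref{thm : ess set and rank matrix} and then run submodularity arguments. By that theorem, every $(r,[i,j])\in\mE^{\mathrm{r}}\setminus\{(k,[1,n])\}$ satisfies $r=\rank([i,j])=\rank([i+1,j])=\rank([i,j-1])=\rank([i-1,j])-1=\rank([i,j+1])-1$ when $i\neq j$ (and $r=0$, $\rank([i-1,i])=\rank([i,i+1])=1$ when $i=j$); in particular $i\in\langle[i+1,j]\rangle$, $j\in\langle[i,j-1]\rangle$, $i-1\notin\langle[i,j]\rangle$, $j+1\notin\langle[i,j]\rangle$, and $[i,j]\subsetneq[n]$. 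I will use repeatedly the subadditivity $\rank(A\cup B)\le\rank(A)+\rank(B)$, the submodularity $\rank(A\cup B)+\rank(A\cap B)\le\rank(A)+\rank(B)$, monotonicity, and $\rank(S)\le|S|$.

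\textbf{Proof of (E1) and (E2).} For (E1): $(k,[1,n])\in\mE^{\mathrm{r}}$ holds by the definition of the ranked essential family together with \Cref{prop : rank dots ess sets}(1). Fix $(r,[i,j])\in\mE^{\mathrm{r}}\setminus\{(k,[1,n])\}$. Then $r\ge 0$ is clear; from $r=\rank([i,j-1])\le|[i,j]|-1$ (the case $i=j$ being trivial) we get $r<|[i,j]|$; since $i-1\notin\langle[i,j]\rangle$ the flat $\langle[i,j]\rangle$ is proper, hence $r=\rank(\langle[i,j]\rangle)<k$; and subadditivity gives $k=\rank([n])\le\rank([i,j])+\rank([n]\setminus[i,j])\le r+|[n]\setminus[i,j]|$. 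For (E2), write $[i_2,j_2]=[i_2,i_1-1]\sqcup[i_1,j_1]\sqcup[j_1+1,j_2]$; proper containment makes at least one outer piece nonempty, and the boundary element of $[i_1,j_1]$ adjacent to it (namely $j_1+1$, or else $i_1-1$) lies in $[i_2,j_2]$ but not in $\langle[i_1,j_1]\rangle$, so $r_2\ge\rank([i_1,j_1]\cup\{j_1+1\})=r_1+1>r_1$. For the upper bound, delete from $[i_2,j_2]$ whichever of its (redundant) extreme elements $i_2,j_2$ does not lie in $[i_1,j_1]$: this leaves a cyclic interval of rank still $r_2$ containing $[i_1,j_1]$ with one fewer element outside $[i_1,j_1]$, so subadditivity yields $r_2\le r_1+\bigl(|[i_2,j_2]\setminus[i_1,j_1]|-1\bigr)$.

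\textbf{Proof of (E3).} The key tool is an \emph{extension lemma}: if a cyclic interval $X$ has both of its extreme elements in the closure of $X$ with that element removed, then enlarging $X$ leftward, then rightward, as long as the rank does not increase, terminates at an essential set $E\supseteq X$ with $\rank(E)=\rank(X)$, and the minimal essential set containing $X$ then also has rank $\rank(X)$. In either case of (E3), $i_1\in\langle[i_1+1,j_1]\rangle\subseteq\langle[i_1+1,j_2]\rangle$ and $j_2\in\langle[i_2,j_2-1]\rangle\subseteq\langle[i_1,j_2-1]\rangle$, so $[i_1,j_2]$ has redundant extreme elements and the lemma gives $r_3=\rank([i_1,j_2])$. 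In case (1), $[i_1,j_2]=[i_1,j_1]\sqcup[j_1+1,i_2-1]\sqcup[i_2,j_2]$, so
\[
r_3=\rank([i_1,j_2])\le r_1+\rank([j_1+1,i_2-1])+r_2\le r_1+r_2+r_4+\bigl|[j_1+1,i_2-1]\setminus[i_4,j_4]\bigr|,
\]
the last inequality because an independent subset of $[j_1+1,i_2-1]$ meets the dependent set $[i_4,j_4]$ in at most $r_4$ elements; rearranging gives the claim (and the convention $(0,\emptyset)$ is covered by $\rank\le|\cdot|$). In case (2), $[i_1,j_2]=[i_1,j_1]\cup[i_2,j_2]$ and $[i_2,j_1]=[i_1,j_1]\cap[i_2,j_2]$, so submodularity gives $r_1+r_2\ge\rank([i_1,j_2])+\rank([i_2,j_1])=r_3+\rank([i_2,j_1])$, and it remains to show $\rank([i_2,j_1])=r_4+|[i_2,j_1]\setminus[i_4,j_4]|$, i.e.\ that a maximal essential set inside the overlap $[i_2,j_1]$ carries all of its rank deficiency; this is to be obtained from \Cref{thm : rank cyclic intervals} by checking that the minimizer there may be taken inside $[i_2,j_1]$ for this particular interval (an intersection of two essential sets).

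\textbf{Where the difficulty lies.} The submodularity bookkeeping above is routine; the genuine work is in the two structural facts used for (E3): (i) that the inclusion-minimal essential set containing a ``both-extreme-elements-redundant'' cyclic interval has rank equal to that interval — which needs the extension lemma plus care about possible non-uniqueness of a minimal such set — and (ii) that a maximal essential set contained in the overlap $[i_1,j_1]\cap[i_2,j_2]$ realizes the rank of that overlap. Establishing these requires a careful analysis of how two essential sets can overlap or wrap around the cyclic order (exactly the configurations enumerated in (E3)), and the bookkeeping of which elements are boundary, gap, or overhang in the cyclic setting is the main obstacle; once (i) and (ii) are in place the inequalities follow as above.
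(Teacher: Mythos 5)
Your architecture for (E3) is essentially the paper's: subadditivity across the gap in the disjoint case, submodularity in the overlapping case, plus the two structural facts that (i) a minimal essential set containing $[i_1,j_2]$ has rank $\rank([i_1,j_2])$ and (ii) the maximal essential set inside the overlap satisfies $\rank([i_2,j_1])=r_4+|[i_2,j_1]\setminus[i_4,j_4]|$. These are precisely the paper's Claim 1 and Claim 2. The problem is that you do not prove either of them, and you say so yourself in your closing paragraph. This is not a presentational omission but the actual content of (E3). For (ii) in particular, submodularity only gives $r_1+r_2\ge r_3+\rank([i_2,j_1])$, and \Cref{thm : rank cyclic intervals} bounds $\rank([i_2,j_1])$ from \emph{above} by $r_4+|[i_2,j_1]\setminus[i_4,j_4]|$ --- the wrong direction. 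What you must rule out is an essential set $(r',[c,d])$ straddling the boundary of the overlap with $r'+|[i_2,j_1]\setminus[c,d]|$ strictly smaller; the paper does this by showing such a $[c,d]$ would be forced into $[i_1,j_1]\cap[i_2,j_2]$ and hence dominated by $[i_4,j_4]$. Similarly, your ``extension lemma'' for (i) is asserted rather than proved, and the issue you flag (a minimal essential set containing $[i_1,j_2]$ need not a priori be comparable with the one produced by extension, so its rank could exceed $\rank([i_1,j_2])$) is exactly what breaks the chain $r_3=\rank([i_1,j_2])\le r_1+r_2+r_4+\cdots$ in case (1) if left unresolved. Until (i) and (ii) are established, (E3) is not proved.

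On the positive side, your treatments of (E1) and (E2) are complete and correct. Your (E2) argument is genuinely different from the paper's: the paper bounds $r_2-r_1$ by counting dots in $P_{(i_2,j_2-i_2+1)}\setminus P_{(i_1,j_1-i_1+1)}$ and locating the dot on the antidiagonal through the corner to force strictness, whereas you delete a redundant extreme element of $[i_2,j_2]$ lying outside $[i_1,j_1]$ (using $\rank([i_2+1,j_2])=\rank([i_2,j_2-1])=r_2$ from \Cref{thm : ess set and rank matrix}) and then apply subadditivity to get $r_2\le r_1+|[i_2,j_2]\setminus[i_1,j_1]|-1$ directly. That is cleaner and avoids the diagram entirely; it would be a worthwhile simplification if the rest of the proof were completed.
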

\begin{proof}
By construction of the essential family we have that $(k, [1,n])\in \mE^{\mathrm{r}}$. Moreover, if $(r,[i,j])\in \mE^{\mathrm{r}}\setminus\{(k, [1,n])\}$, then $0\leq r = \rank([i,j])= \rank([i+1,j])\leq |[i,j]|-1$.

    Let $(r_1,[i_1,j_1]), (r_2,[i_2,j_2]) \in \mE^{\mathrm{r}}$ be distinct ranked essential sets such that $[i_1,j_1] \subsetneq [i_2,j_2]$. Suppose $i_1-1\in[i_2,j_2]$ (similar argument works if $j_1+1\in[i_2,j_2]$), then $r_2 = \rank([i_2,j_2])\geq \rank([i_1-1,j_1])= \rank([i_1,j_1])+1>r_1$, hence $r_2-r_1>0$. Moreover, the following relation holds.
        \begin{equation}\label{eq : inequalities C2}
            r_2-r_1 \leq |\{\text{dots in } P_{(i_2,j_2-i_2+1)}\setminus P_{(i_1,j_1-i_1+1)}\}| \leq |[i_2,j_2]\setminus [i_1,j_1]|.
        \end{equation}
    Suppose $[i_2,j_2]\neq [1,n]$ is an essential set, then the dot in the antidiagonal through $(i_2,j_2-i_2+1)$ will be in row $h$ for $i_2<_{i_2} h<_{i_2} j_2$. If $h\in [i_1,j_1]$, then the number of dots in $P_{(i_2,j_2-i_2+1)}\setminus P_{(i_1,j_1-i_1+1)}$ is strictly larger than $r_2-r_1$. If $h\in [i_2,j_2]\setminus [i_1,j_1]$, then the number of dots in $P_{(i_2,j_2-i_2+1)}\setminus P_{(i_1,j_1-i_1+1)}$ is strictly smaller than $|[i_2,j_2]\setminus [i_1,j_1]|$. Hence, one of the two inequalities from \cref{eq : inequalities C2} is strict, i.e. $r_2-r_1 < |[i_2,j_2]\setminus [i_1,j_1]|$.

    Let $(r_1,[i_1,j_1]), (r_2,[i_2,j_2])$ be as in the assumption of {(E3)}. Let us prove some facts about the rank of the cyclic intervals involved in the statement.

    \medskip
    \noindent {\bf Claim 1.} There exists an essential set $(r,[a,b])$ containing $[i_1,j_1]\cup[i_2,j_2]$ of rank $\rank([i_1,j_2])$. Indeed, if $[i_1,j_2]=[1,n]$, then it is the essential set $(k,[1,n])$. Otherwise $\rank([i_1,j_2]) = \rank([i_1 +1, j_2]) = \rank([i_1,j_2-1])$ is either equal to $k$, hence $[a,b] = [1,n]$, or is strictly smaller than $k$ and so $[a,b]$, the maximal set containing $[i_1,j_2]$ with the same rank, will be an essential set.

       \medskip
 \noindent {\bf Claim 2.} Suppose $i_2 <_{i_1} j_1$, that is, $[i_1,j_1]$ and $[i_2,j_2]$ intersect in $[i_2,j_1]$. Let $(r,[a,b])$ be the maximal essential set contained in $[i_2,j_1]$, then $\rank([i_2,j_1]) = r + |[i_2,j_1]\setminus [a,b]|$. First note that by Claim 1 there is indeed a unique maximal essential set contained in $[i_2,j_1]$. Moreover, $r + |[i_2,j_1]\setminus [a,b]| \leq |[i_2,j_1]|$. Suppose there exists an essential set $(r',[c,d])$ such that $r' + |[i_2,j_1]\setminus [c,d]|< r + |[i_2,j_1]\setminus [a,b]|$. Then, by Claim 1 and property (E2), $[c,d]\not\subseteq [i_2,j_1]$. Moreover, $[i_2,j_1] \not\subseteq [c,d]$ since otherwise $\rank([i_2,j_1]) = r'$, and by \Cref{thm : ess set and rank matrix}, $[c,d]$ would be contained in both $[i_1,j_1]$ and $[i_2,j_2]$, hence $[i_2,j_1] = [c,d] = [a,b]$. Then suppose, without loss of generality, that $[i_2,j_1]\cap [c,d] = [c, j_1]$. Since $r' + |[i_2,j_1]\setminus [c,d]| < |[i,j]|$, we have that $|[c,j_1]|>r'$ and $[c,j_1]$ does not contain an essential set. Then $\rank([c,j_1]) = r'$ and $[c,d]$ is contained in $[i_1,j_1]\cap [i_2,j_2]$. Hence $\rank([i_2,j_1]) = r + |[i_2,j_1]\setminus [a,b]|$.

\medskip

    Let us now consider the two cases separately.
    \begin{itemize}
        \item[(1)] Suppose $[i_1,j_1]\cap [i_2, j_2] = \emptyset$. By Claim 1, $(r_3,[i_3,j_3])$ has the same rank as $[i_1,j_2]$. Moreover, by \Cref{thm : rank cyclic intervals}, $\rank([j_1+1,i_2-1]) \leq r_4+ |[j_1+1,i_2-1]\setminus [i_4,j_4]| $. Hence
        \[ r_1+r_2 = \rank([i_1,j_1]) + \rank([i_2,j_2]) \geq \rank([i_1,j_2]) - \rank([j_1+1,i_2-1]) \geq r_3 - r_4 - |[j_1+1,i_2-1]\setminus [i_4,j_4]|. \]
        \item[(2)] Suppose $[i_1,j_1]$ and $[i_2,j_2]$ intersect in $[i_2,j_1]$. Then, by Claim 1 and 
        Claim 2, it follows that
        \[ r_1+r_2 = \rank([i_1,j_1])+\rank([i_2,j_2]) \geq \rank([i_1,j_1]\cup[i_2,j_2]) + \rank([i_2,j_1]) = r_3 + r_4 + |[i_2,j_1]\setminus [i_4,j_4]| \]
        for $(r_3,[i_3,j_3]), (r_4,[i_4,j_4])$ as in the statement.
    \end{itemize}
\end{proof}

\begin{remark}
    The properties from \Cref{thm : prop ess sets}, can be interpreted as follow.
    \begin{itemize}
        \item[(E1)] If $(r,I)$ is a ranked essential set, then the rank is non-negative and the vectors in $I$ are dependent. 
        \item[(E2)] A ranked essential set $(r,I)$ contains all the vectors in the closure of $I$ in the same cyclic interval as $I$. Hence, if there is a ranked essential set $(r_2,I_2)$ containing $I$, its rank will be strictly larger and $I_2$ contains more dependency conditions than the ones in $(r,I)$.
        \item[(E3)] This condition corresponds to the sub-modular inequality for the rank function:
        \[ \rank(A)+\rank(B) \geq \rank(A\cup B) + \rank(A\cap B), \]
        in terms of ranked essential sets.
        
        Note that if two essential sets $(r_1,[i_1,j_1])$ and $(r_2,[i_2,j_2])$ intersect in two cyclic intervals, that is, $[i_1,j_1]\cap [i_2,j_2] = [i_1,j_2]\cup[i_2,j_1]$, by Theorem 25 of \cite{rankfunction2020}, the rank of the intersection is 
        \[ \rank([i_1,j_2]\cup[i_2,j_1]) = \min\{ r_1+r_2-k, r_3+|[i_1,j_2]\setminus[i_3,j_3]| +r_4 + |[i_2,j_1]\setminus [i_4,j_4]|\}, \]
        where $(r_3,[i_3,j_3])$ is the maximal essential set contained in $[i_1,j_2]$ and $(r_4,[i_4,j_4])$ is the maximal essential set contained in $[i_2,j_1]$. Hence, no additional condition is required for this case.
    \end{itemize}
\end{remark}
In \Cref{thm : chess theorem}, we will show that these properties are sufficient for a family of pairs consisting of a non-negative integer and a cyclic interval to form the ranked essential family of a positroid.

\subsection{Minimal rank conditions}\label{subsection : minimal rank conditions}

As seen in the definition of connected essential sets, the rank conditions within the ranked essential family of a positroid are not minimal. 
To restrict the ranked essential family to minimal conditions, we introduce a constant associated with each essential set. These constants intuitively count the number of additional dependent points in each essential set. We define them inductively as follows.

\begin{definition}[Excess] Let $\mE^{\mathrm{r}}$ be a ranked essential family. For each ranked essential set $(r,I) \in \mE^{\mathrm{r}}$,~let
\[ e_I = |I|-r-\sum_{J\in \mE, J\subsetneq I} e_J. \]
The quantity $e_I$ is called \emph{excess} of the essential set $I$.
\end{definition}

If the excess of a set vanishes, it indicates that there is no new information on the dependencies. Hence we give the following definition.

\begin{definition}[Core]
    Let $\mE^{\mathrm{r}}$ be a ranked essential family. The \emph{core} $\mC^{\mathrm{r}}$ of $\mE^{\mathrm{r}}$ is the subset given by essential sets with positive excess:
    \[ \mC^{\mathrm{r}} = \{ (r,I) \mid (r,I) \in \mE^{\mathrm{r}} \text{ and } e_I >0 \}. \]
\end{definition}

\begin{example}
    Consider the positroid from \cite[Example 1]{bonin2023characterization}. $\mP$ is defined to be the parallel connection of four copies of $\mU_{2,3}$ on $\{1,2,3\}, \{4,5,6\}, \{7,8,9\}, \{3,6,9\}$, {where the parallel connection of matroids is as defined in \cite[Proposition 7.1.18]{oxley2006matroid}}. Equivalently, $\mP$ is the positroid associated to the bounded affine permutation $\pi= (3 \; 10 \; 8 \; 6 \; 13 \; 11 \; 9 \; 16 \; 14)$. The ranked essential family is
    \[ \mE^{\mathrm{r}} =\{ (2,[1,3]),(2,[4,6]),(2, [7,9]),(4,[3,9]),(4,[6,3]),(4,[9,6]), (5,[1,9]) \}. \]
    This example illustrates an essential family where the smallest essential set containing the union of two essential intervals is strictly larger than the union. Specifically, the smallest essential set containing $[1,3]\cup [4,6]$ is $[9,6]\supsetneq [1,6]$.
    In this example, all rank conditions imposed by essential sets are necessary to define $\mP$. Moreover, the excess of each essential set is equal to $1$ and $\mE^{\mathrm{r}} = \mC^{\mathrm{r}}$.
    From the rank constraints on the essential sets, we can recover $\rk(\{3,6,9\}) = 2$ by applying \cite[Theorem 25]{rankfunction2020}.
\end{example}

\begin{remark}
    Let $\mE^{\mathrm{r}}$ be a ranked essential family. If an essential set $(r,[i,j])\in \mE^{\mathrm{r}}$ is disconnected, i.e. there exists pairwise disjoint essential sets  $(r_1,[i_1,j_1]),\ldots , (r_m,[i_m,j_m])\in \mE^{\mathrm{r}}$  such that $r=r_1 +\dots+ r_m + |[i,j]\setminus\bigcup_{a\in [m]} [i_a,j_a]| $ and $[i,j] \supseteq \bigcup_{a\in [m]} [i_a,j_a]$, then $(r,[i,j])$ is not in the core. Indeed
    \[ e_{[i,j]} \leq |[i,j]|-r - \sum_{a \in [m]} \left(|[i_a,j_a]|-r_a\right) = 0. \]
    However, in general the core $\mC^{\mathrm{r}}$ is strictly smaller than the family of ranked connected essential sets $\mC\mE^{\mathrm{r}}$. Consider for example the positroid with ranked essential family
    \[ \mE^{\mathrm{r}} = \{ (2,[1,3]),(2,[3,5]),(3,[1,5]),(4,[1,6]) \}. \]
    Then every essential set is connected but the core is $\mC^{\mathrm{r}} = \{ (2,[1,3]),(2,[3,5]),(4,[1,6]) \}$.
\end{remark}

\begin{question}
    In \cite{eriksson1996combinatorics}, Eriksson and Linusson introduced the core of for essential sets of permutations and showed that it generally has a much smaller size than the ranked essential family. What can be said about the size of the ranked essential family and of the core for bounded affine permutations?
\end{question}

\subsection{The retrieval algorithm} \label{subsection : algorithm}

Here, we answer the question of how to reconstruct the bounded affine permutation associated to a ranked essential family. The algorithm we provide to answer this question is robust. Indeed, the algorithm outputs a bounded affine permutation whenever the input contains rank conditions on cyclic intervals which are realizable by a positroid.

\begin{definition}[Proper dotting]
    Consider an $n\times (n+1)$ array of squares in the plane. A subset $D$ of squares in the array is said to be a \emph{proper dotting} if $D$ contains at most one square in each row of the array and at most one square in each antidiagonal of the array. A proper dotting is called \emph{maximal} if it contains exactly one square in each row of the array.  The \emph{rank} and the \emph{dependency} of a square $(i,j)$ with respect to a proper dotting $D$ are defined as follows:
    \[ r_D(i,j) = |D\cap P_{(i,j)}| \quad\text{and}\quad d_D(i,j) = |D \cap T_{(i,j)}|. \]
\end{definition}

\begin{remark}\label{rmk : dotting positroid}
    A maximal proper dotting of a $n\times (n+1)$ array corresponds uniquely to a bounded affine permutation $\pi$ of size $n$, given by $\pi(i) = j+i-1$ for $i\in [n]$, where $j\in [n+1]$ is such that $(i,j)\in D$. Hence, a maximal proper dotting defines a positroid.
\end{remark}

\noindent\textbf{Description of \Cref{alg : essential to permutation}.} 
The algorithm takes as input $E^{\mathrm{r}}$, a subset of the array with non-negative integer labels, containing the required rank conditions. Its objective is to identify, if feasible, a positroid satisfying such rank conditions. 
In the process, $\mathcal{S}$ represents the shaded squares and $D$ contains the position of the dots added to the array at any point during the algorithm. Additionally, the algorithm initializes $r$ to $0$.

Consider a labeled square $(r,(i,j)) \in E^{\mathrm{r}}$. Define $a_D(i,j) = j-d_D(i,j)-r$. Let $h\in[i,j]$ be the smallest row, according to the order $<_i$, not containing a dot. We consider two cases:

\begin{enumerate}
\item If there exists $\ell \in [h+1,j]$ such that $h$ is dependent on $h+1, \dots, \ell$, then add a dot in the smallest such $\ell$. Repeat this process until no further addition is possible.
\item If no such $\ell$ exists, then proceed to the next labeled square $(r, (i',j'))$ or increase the label by one. This case occurs if the number of elements in the interval corresponding to the square $(i,j)$ minus the dots already contained in $T_{(i,j)}$ is smaller or equal to $r$.
\end{enumerate}

Once we have applied the first part to each labeled square, we proceed to fill the remaining rows. 
Let $k$ be the maximum label, which is usually appearing as the label of $(1,n)$. For each row $h$ not containing a dot, add a dot in the minimal square with rank $k$, i.e. the minimal $\ell$ such that $\ell$ minus the number of dots contained in $T_{(h,\ell)}$ equals $k$.
 The algorithm is formalized in \Cref{alg : essential to permutation}.
\SetKwComment{Comment}{$\textcolor{gray}{\triangleleft}$ }{}
\NlSty{\texttt} 
\BlankLine
\begin{algorithm}[h!]
\caption{Construction of positroid from rank conditions on cyclic intervals}
\label{alg : essential to permutation}
\KwIn{$E^{\mathrm{r}}$ a set of pair of integer number and labels for a square in an $n \times (n+1)$ array.
\\
}
\BlankLine
\KwOut{A dotting $D$ of the array.}
\BlankLine
$D \leftarrow \emptyset, \hspace{0.5em} \mathcal{S} \leftarrow \emptyset, \hspace{0.5em} r\leftarrow 0, \hspace{0.5em} k\leftarrow \max\{ r \mid (r, (i,j))\in E^{\mathrm{r}}\}$\;
$E^{\mathrm{r}} \leftarrow \Sorted(E^{\mathrm{r}})$  \Comment*[r]{\textcolor{gray}{Sort $E^{\mathrm{r}}$ by label $r$ in ascending order}}
\ForEach{ $(r,(i,j)) \in E^{\mathrm{r}}$ } { 
$ a \leftarrow j-r-d_D(i,j)$\;
\While{$a>0$}{
$h \leftarrow \min_{<_i}\{ \alpha \mid R_\alpha \not\subseteq \mS \}$\;
$\ell \leftarrow \min\{ \beta \mid \beta-1 -d_D(h,\beta) =r \}$\;
$D \leftarrow D \cup \{(h,\ell)\}$\;
$\mS \leftarrow \mS \cup R_h \cup A_{(h,\ell)}$\;
$ b \leftarrow j-r-d_D(i,j)$\;
\eIf{$a = b$}{ \Return{\texttt{\emph{\color{myred}$E^{\mathrm{r}}$ is not valid}}}}
{$a \leftarrow b$; }
}
}
\ForEach{ $h$ such that $R_h \not\subseteq \mS$ }{
$\ell \leftarrow \min\{ \beta \mid \beta-1 -d_D(h,\beta) =k \}$\;
\eIf{$\ell >n+1$}{ \Return{\texttt{\emph{\color{myred}$E^{\mathrm{r}}$ is not valid}}}}{
$D \leftarrow D \cup \{(h,\ell)\}$\;
$\mS \leftarrow \mS \cup R_h \cup A_{(h,\ell)}$\;}
}
\If{ $\mS \neq [n]\times [n+1]$}{ \Return{\texttt{\emph{\color{myred}$E^{\mathrm{r}}$ is not valid}}}}
\ForEach{$(r,(i,j))\in E^{\mathrm{r}}$}{
\If{ $r_D(i,j) \neq r$ }{
{ \Return{\texttt{\emph{\color{myred}$E^{\mathrm{r}}$ is not valid}}}}
}
}
\Return{$D$}\;
\end{algorithm}

\newpage
\begin{remark}
    A retrieval algorithm for permutations and their essential sets is given in \cite{eriksson1995size}. However, the slightly different definition of essential sets introduced here, requires us to have a completely different approach in the retrieval algorithm. In particular, we use the bijection between positroids and bounded affine permutations to guide our approach.
\end{remark}

\noindent{\bf Implementation.} An implementation of the algorithm can be found at \begin{center} \href{https://github.com/frazaffa/essential-sets-positroid}{github.com/frazaffa/essential-sets-positroid} 
\end{center}
A visual implementation of the algorithm can be found at the following \href{https://sites.google.com/view/francescazaffalon/home/positroid?authuser=0}{website}. Here the required input is a list of triples, where each triple consists of the rank followed by the row and the column of the square in the array corresponding to the cyclic interval whose rank we want to assign.

\begin{example}
    Consider the set of labeled squares $E^{\mathrm{r}} = \{(1,(3,2)), (3, (1,5)) \}$. The steps of the algorithm applied to $E^{\mathrm{r}}$ can be seen in \Cref{fig : example alg}.
    \begin{figure}
        \centering
        \input{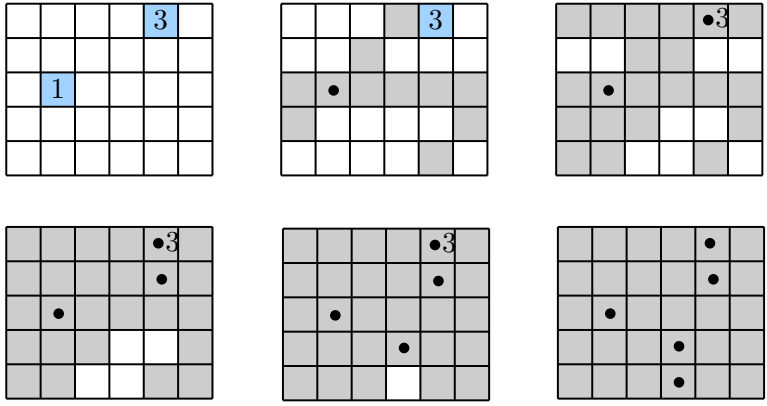}
        \caption{\Cref{alg : essential to permutation} applied to the labeled squares in blue in the first array.}
        \label{fig : example alg}
    \end{figure}
    Hence, the corresponding decorated permutation is $\pi = (5\; 6\; 4\; 7\; 8)$. 
\end{example}

The algorithm terminates if provided with compatible rank conditions and returns the dotting corresponding to a positroid. This positroid has a rank equal to the maximal label in the given squares and maximal among those satisfying the provided rank conditions.

\begin{theorem}
    Let $E^{\mathrm{r}}$ be a set of labeled squares from an $n\times (n+1)$ array and suppose we have a labeling of the square $(1,n)$. Then \Cref{alg : essential to permutation} terminates and outputs a proper dotting if and only if there exists a positroid $\mP$ satisfying the rank conditions corresponding to the input, i.e. for each $(r,(i,j))\in E^{\mathrm{r}}$, $\rank([i,j+i-1])=r$ by $\mP$ and such that the core of $\mP$ is contained in the rank information encoded in $E^{\mathrm{r}}$.
\end{theorem}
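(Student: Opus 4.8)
The plan is to prove the two implications separately, relying throughout on the correspondences from the earlier subsections: a maximal proper dotting of the $n\times(n+1)$ array is the same thing as a bounded affine permutation, hence as a positroid (\Cref{rmk : dotting positroid}); for such a dotting $D$ with associated positroid $\mP$ one has $r_D(i,j)=\rank([i,j+i-1])$ in $\mP$ (\Cref{prop : rank dots ess sets}); and essential sets translate into the ``corner'' rank conditions of \Cref{thm : ess set and rank matrix}, all cyclic-interval ranks being recoverable from the ranked essential family through the minimum formula of \Cref{thm : rank cyclic intervals}. Note first that the algorithm always halts, since the inner \texttt{while} loop strictly decreases the bounded quantity $a$ (by the guard ``\texttt{if} $a=b$'') and every \texttt{for} loop ranges over a finite set; so the real content of ``terminates and outputs a proper dotting'' is that the algorithm never reports ``$E^{\mathrm{r}}$ is not valid'', and that the returned $D$ is indeed a maximal proper dotting.

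For the ``if'' direction, suppose a positroid $\mP$ satisfies the prescribed rank conditions and its core is contained in $E^{\mathrm{r}}$; write $D_\mP$ for its dotting. I would show by induction on the execution, with invariant $D\subseteq D_\mP$, that the algorithm returns $D_\mP$. When a square $(r,(i,j))$ is processed, the invariant gives $d_D(i,j)\le|D_\mP\cap T_{(i,j)}|=|[i,j+i-1]|-\rank([i,j+i-1])=j-r$, so $a=j-r-d_D(i,j)\ge 0$; if $a=0$ all of $\mP$'s dots lying inside the interval are already present and the algorithm correctly moves on. If $a>0$, the smallest undotted row $h$ in the order $<_i$ lies in $[i,j+i-1]$ (otherwise all its rows would be dotted and $a$ would vanish), and the key step is to prove that the leftmost $\ell$ with $\ell-1-d_D(h,\ell)=r$ equals $\pi_\mP(h)-h+1$; then $(h,\ell)\in D_\mP$, the guard $a=b$ is never met, and the invariant persists. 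Once all squares have been processed, the hypothesis that the core of $\mP$ lies in $E^{\mathrm{r}}$ guarantees that every dependency of $\mP$ has already been forced: a dot $(h,\ell)\in D_\mP$ recording a dependency sits in a maximally dependent cyclic interval whose maximal essential superset of equal rank is either of positive excess --- hence in the core and in $E^{\mathrm{r}}$ --- or has its rank data implied by its core subintervals, which lie in $E^{\mathrm{r}}$; in either case the relevant \texttt{while} loop has inserted the dot. The rows still undotted then carry $\mP$'s full-rank-$k$ dots, which are exactly the ones produced by the second \texttt{for} loop, so the test $\ell>n+1$ never fires and $\mS$ fills up; the concluding checks pass because $D=D_\mP$ realises the prescribed ranks.

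For the ``only if'' direction, suppose the algorithm returns a dotting $D$. Each loop that adds a dot selects an undotted row and immediately shades it, and one checks, using the tests $\mS\neq[n]\times[n+1]$ and $r_D(i,j)\ne r$, that $D$ is a maximal proper dotting --- one dot per row, and no two dots on a common antidiagonal, since a prior dot on the sub-antidiagonal of $(h,\ell)$ is counted by $d_D(h,\ell)$, pushing the minimizing $\ell$ past it --- and therefore determines a positroid $\mP$; moreover the final \texttt{for} loop certifies $r_D(i,j)=r$, i.e.\ $\rank([i,j+i-1])=r$ in $\mP$, for every $(r,(i,j))\in E^{\mathrm{r}}$. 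It remains to see that the core of $\mP$ is contained in $E^{\mathrm{r}}$. Here one uses that $\mP$ is the \emph{maximal} positroid obeying the upper bounds $\rank([i,j+i-1])\le r$ coming from $E^{\mathrm{r}}$: the \texttt{while} loops create exactly the dependencies those bounds force (leftmost placement), while the final rank-$k$ filling creates none, so $\rank(J)=\min\{\, r+|J\setminus I|\mid (r,I)\in E^{\mathrm{r}}\cup\{(0,\emptyset)\}\,\}$ for every cyclic interval $J$. Comparing this with the formula of \Cref{thm : rank cyclic intervals} applied to $\mE^{\mathrm{r}}(\mP)$, every essential set of $\mP$ of positive excess must appear among the conditions $(r,I)$ that are actually attained in the minimum, and property (E2) forces it to coincide with such a condition (not a proper sub- or super-interval); hence the core of $\mP$ lies in $E^{\mathrm{r}}$.

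The crux in both directions is the interplay between the running quantities $d_D(i,j)$, $r_D(i,j)$ and the partial dotting on the one side and partial rank data on the other --- concretely, the claim that the greedy leftmost $\ell$ reproduces $\pi_\mP(h)-h+1$, and dually that the output positroid realises the minimum formula over $E^{\mathrm{r}}$. These should follow from \Cref{thm : ess set and rank matrix}, \Cref{thm : rank cyclic intervals} and the submodular-type inequality (E3), the delicate point being the bookkeeping of which rows count as already ``processed'' relative to the cyclic orders $<_i$. A secondary obstacle is confirming in all cases that the returned configuration is a genuine proper dotting, i.e.\ that no two dots are ever forced onto a common antidiagonal.
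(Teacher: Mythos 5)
Your overall strategy coincides with the paper's: both directions are argued the same way, and in particular your loop invariant $D\subseteq D_\mP$ maintained by induction on the execution is exactly the paper's device (the paper calls the reference dotting $\mathcal{D}$ and shows the algorithm reconstructs it step by step). The one place where you defer rather than argue --- ``the key step is to prove that the leftmost $\ell$ with $\ell-1-d_D(h,\ell)=r$ equals $\pi_\mP(h)-h+1$'' --- is precisely where the paper's proof does its real work, so you should be aware of what that argument requires: one first shows that every dot placed so far while processing the square $(r,(i,j))$ in rows $i$ through $h-1$ must lie inside $T_{(i,j)}$ (the paper derives a contradiction with $a=j-r-d_D(i,j)>0$ if some earlier dot escaped the triangle), which gives $r_{\mathcal D}(h,j+i-h)=r$, and then invokes the rank-maximality of $\mathcal{D}$ among positroids satisfying the input conditions to conclude that $\mathcal{D}$'s dot in row $h$ sits exactly at the greedy leftmost position. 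Without that triangle-containment step the identification of $\ell$ with $\pi_\mP(h)-h+1$ does not follow from \Cref{thm : ess set and rank matrix} or \Cref{thm : rank cyclic intervals} alone, so treat it as a genuine lemma to prove rather than bookkeeping. On the other hand, your treatment of the ``only if'' direction is more complete than the paper's: the paper only verifies that the output $D$ is a maximal proper dotting realising the prescribed ranks and does not explicitly check the core-containment clause of the statement, whereas your argument via the maximality of the output positroid and the minimum formula supplies the missing verification.
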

\begin{proof}
    Let $k$ denote the label of $(1,n)$. Note that since $k$ denotes the rank of the positroid, if $k$ is not the maximal label, then $E^{\mathrm{r}}$ is not valid.

    Note that the algorithm always terminates, either with a proper dotting or an error. Indeed, the errors in line {\tt $12$} and {\tt $21$} avoid entering an infinite loop and the error in line {\tt $28$} appears if the produced dotting is not a maximal proper dotting.
    
    \textbf{($\Leftarrow$)} Suppose the rank conditions in $E^{\mathrm{r}}$ can be satisfied by a positroid as in the statement. We want to prove that the output of the algorithm is a proper dotting. Let $\mathcal{D}$ be a maximal proper dotting such that $r_{\mD}(i,j)=r$ for every $(r,(i,j))\in E^{\mathrm{r}}$ and the core of the bounded affine permutation corresponding to $\mD$ is contained in the rank information given by $E^{\mathrm{r}}$. We want to prove that at every step of the algorithm, the dotting $D$ produced is a subset of $\mD$.

    In the first loop, for each $(r,(i,j))\in E^{\mathrm{r}}$ considered in increasing rank order, we add dots inside the triangle $T_{(i,j)}$, to restrict the rank of $[i,j+i-1]$ to be at maximum $r$. Let $D$ be the dotting at this step and suppose $D$ is a subset of $\mD$. We enter the loop if $j-d_D(i,j) > r$, that is whenever the number of dots in $T_{(i,j)}$ is not high enough. Since the quantity $j-r-d_D(i,j)$  is the excess of $[i,j+i-1]$, this is equivalent to the requirement that $(i,j)$ corresponds to a cyclic interval in the core of $\mD$. Let $h$ be the first empty row starting from $i$. Since $D$ can be extended to $\mD$ and $r_{\mD}(i,j)=r$, then $h \in [i,j+i-1]$. Let $\ell$ be such that $(h,\ell)\in \mD$. Note that $r_{\mD}(h, j+i-h)=r$. Indeed the dots that have been added so far $D$ in rows $i$ to $i+h-2$ lie inside $T_{(i,j)}$. Suppose not, let the first dot in rows $i$ to $i+h-2$ lying outside of $T_{(i,j)}$ be in square $(h_2,\ell_2)$, added while considering some square in $E^{\mathrm{r}}$ labeled by $r_2\leq r$. Then $i+j-h - d_D(h_2, i+j-h) < r_2$, hence $d_D(h_2,i+j-h) > i+j-h -r_2 \geq i+j-h -r$. Since all the rows between $i$ and $h_2$ have been filled by assumption with dots inside $T_{(i,j)}$, this would imply that $d_D(i,j) > h-i + i+j-h -r = j-r$, against the assumption that $a = j-r-d_D(i,j)>0$. It follows that $r_{\mD}(h,\ell)=r$ and $\ell\in T_{(i,j)}$. Moreover, $\ell' = \min\{\beta \mid \beta-1 -d_D(h,\beta) =r\}$ is such that $\ell=\ell'$ by rank maximality of $\mD$. Hence the algorithm does not produce an error and $D\cup \{(h,\ell')\} \subseteq \mD$.

    In the second for-loop, we add the dots outside of each $T_{(i,j)}$ considered in the first loop. Suppose $D$ is the dotting constructed so far and it is a subset of $\mD$ and let $h$ be the row we are considering. Let $\ell' = \min\{ \beta \mid \beta -1 -d_D(h,\beta)=k \}$, where $k$ is the label of the square $(1,n)$. Then, since for every $(r,(i,j))\in E^{\mathrm{r}}$, $d_D(i,j)\geq j-r$, $\ell'$ will lie outside of every triangle $T_{(i,j)}$. This guarantees that the rank of the square $(i,j)$ with respect to the dotting we are constructing is actually equal to $r$ for each $(r,(i,j))\in E^{\mathrm{r}}$. Let $\ell$ be such that $(h,\ell)\in \mD$. By rank maximality of $\mD$, $r_{\mD}(h,\ell)=k$ and $\ell \geq \ell'$. Moreover, suppose for contradiction that $\ell>\ell'$. Then since $r_{\mD}(h,\ell)= r_{\mD}(h,\ell')=k$, $\mD$ contains a dot in the antidiagonal containing $(h,\ell')$ in the rows between $h$ and $h+\ell'-1$. This is not possible by definition of $\ell'$. Hence $\ell=\ell'$, the dotting produced is still a subset of $\mD$ and the algorithm does not produce any error at this step.

    Since we assume $\mD$ to be a maximal proper dotting satisfying the rank conditions in $E^{\mathrm{r}}$ and we proved that the dotting $D$ constructed by the algorithm is equal to $\mD$, the last two checks will be satisfied by $D$, hence the algorithm will terminate and output $D$.
    
    \medskip
    \textbf{($\Rightarrow$)} Suppose now that the algorithm terminates and outputs a dotting, denoted as $D$. Note that $D$ must satisfy the rank conditions specified by $E^{\mathrm{r}}$, as the last step in the algorithm checks for this. Moreover, at every step, we choose a row $h$ that does not already contain a dot. As a consequence, at the end of the algorithm, all the boxes will be shaded if and only if no two dots lie in the same antidiagonal. Hence, $D$ is a proper dotting satisfying the rank conditions given in the input; that is, there exists a positroid satisfying $\rk([i,j+i-1])=r$ for every $(r,(i,j))\in E^{\mathrm{r}}$.
\end{proof}

In particular, it follows that the rank information contained in the core
is enough to uniquely recover the positroid.

\begin{corollary}\label{proposition : core}
    The rank conditions contained in the core of a positroid $\mP$ are minimal rank conditions defining $\mP$.
\end{corollary}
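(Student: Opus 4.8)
The statement has two parts: \textbf{(a)} the rank conditions in $\mathcal{C}^{\mathrm{r}}$ determine $\mathcal P$, and \textbf{(b)} no proper subfamily does. Part (a) is immediate from the preceding theorem: feed \Cref{alg : essential to permutation} the rank conditions of $\mathcal{C}^{\mathrm{r}}$ with $k$ as the label of the square $(1,n)$; since $\mathcal P$ satisfies all the rank conditions of its core and the core of $\mathcal P$ is trivially contained in this input, the algorithm terminates, and by the uniqueness in the $(\Leftarrow)$-direction of that proof the output is the dotting of $\mathcal P$. Thus the core recovers $\mathcal P$.

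For (b) it suffices to show that for each $(r_0,[i_0,j_0])\in\mathcal{C}^{\mathrm{r}}$ there is a positroid $\mathcal P'\neq\mathcal P$ satisfying every condition of $\mathcal{C}^{\mathrm{r}}\setminus\{(r_0,[i_0,j_0])\}$. The case $[i_0,j_0]=[1,n]$ is handled directly, since removing it leaves the total rank unconstrained. For $[i_0,j_0]\neq[1,n]$, the plan is to run \Cref{alg : essential to permutation} on $\mathcal{C}^{\mathrm{r}}\setminus\{(r_0,[i_0,j_0])\}$, still with $k$ labelling $(1,n)$, and argue that it terminates with a proper dotting $D$; then the positroid $\mathcal P'$ it encodes satisfies every condition of $\mathcal{C}^{\mathrm{r}}\setminus\{(r_0,[i_0,j_0])\}$ by the final verification loop of the algorithm, and it remains to see that $\mathcal P'\neq\mathcal P$ — which we do by showing $\rank_{\mathcal P'}([i_0,j_0])>r_0=\rank_{\mathcal P}([i_0,j_0])$.

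The reason $\rank_{\mathcal P'}([i_0,j_0])>r_0$, and the role of the positivity of the excess, is best seen concretely. Recall from the proof of the preceding theorem that while processing a labelled square $(i,j)$ the \texttt{while}-loop is entered exactly when $j-r-d_D(i,j)=e_{[i,\,j+i-1]}$ is positive — precisely at the core intervals — and that it is the processing of $(r_0,[i_0,j_0])$ that pushes the last $e_{[i_0,j_0]}\geq 1$ dots into the triangle $T_{(i_0,\,j_0-i_0+1)}$. Once $(r_0,[i_0,j_0])$ is removed, the intervals of $\mathcal{C}^{\mathrm{r}}$ strictly contained in $[i_0,j_0]$ contribute only $\sum_{J\subsetneq[i_0,j_0]}e_J=|[i_0,j_0]|-r_0-e_{[i_0,j_0]}$ dots to that triangle, while the generic filling finishing the algorithm only ever places dots at positions of rank $k$, which lie outside every triangle $T_{(i,j)}$. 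Hence at most $|[i_0,j_0]|-r_0-e_{[i_0,j_0]}$ dots of $\mathcal P'$ land inside $T_{(i_0,\,j_0-i_0+1)}$, so $\rank_{\mathcal P'}([i_0,j_0])\geq r_0+e_{[i_0,j_0]}>r_0$, as needed.

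The main obstacle is to make this rigorous, and I expect it to split into two. First, one must verify that the algorithm never hits an error line on the reduced input — intuitively safe since the input is realised by $\mathcal P$, but it requires excluding that deleting one core constraint produces two dots on a common antidiagonal or an unfillable row. Second, and more delicate, one must rule out \emph{compensation} by strictly larger core intervals: a priori some $(r',[i',j'])\in\mathcal{C}^{\mathrm{r}}$ with $[i_0,j_0]\subsetneq[i',j']$ could, while its own \texttt{while}-loop runs, deposit a dot inside $T_{(i_0,\,j_0-i_0+1)}$ and keep $\rank_{\mathcal P'}([i_0,j_0])$ at $r_0$. Here one should use that the dots placed while processing $[i',j']$ occupy, by the greedy left-most rule, the minimal free column of rank $r'$ in their rows, together with property (E2) for $[i_0,j_0]\subsetneq[i',j']$, which bounds $r'<r_0+|[i',j']\setminus[i_0,j_0]|$; combined with $e_{[i_0,j_0]}>0$ this should keep every such dot strictly to the right of the sub-antidiagonal $\Delta_{(i_0,\,j_0-i_0+1)}$, hence outside the triangle. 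Once these are settled, the corollary follows.
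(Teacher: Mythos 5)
Your part (a) coincides with what the paper actually does: the paper states this corollary with no separate proof, as an immediate consequence of the preceding theorem --- feeding the core to \Cref{alg : essential to permutation} recovers $\mP$, since $\mP$ itself witnesses the existence hypothesis and its core is (trivially) contained in the input. For sufficiency you and the paper agree.

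For part (b), minimality, the paper offers no argument at all, and your sketch --- run the algorithm on $\mC^{\mathrm{r}}\setminus\{(r_0,[i_0,j_0])\}$ and show the output has rank strictly greater than $r_0$ on $[i_0,j_0]$ --- is a reasonable strategy, but as you yourself flag, it is not yet a proof, and the two gaps you name are genuine. First, you cannot invoke the ($\Leftarrow$) direction of the theorem to guarantee that the algorithm terminates without error on the reduced input: that direction requires a positroid satisfying the input conditions \emph{whose core is contained in the input}, and the only positroid in hand, $\mP$, fails this because its core contains the deleted $(r_0,[i_0,j_0])$; exhibiting a valid witness $\mP'$ in advance is exactly what you are trying to prove exists. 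So error-freeness of the reduced run must be established by direct analysis of the algorithm, which you have not done. Second, the ``no compensation'' step: your appeal to the fact that the final filling places dots at positions of rank $k$, hence outside every triangle $T_{(i,j)}$, is lifted from the theorem's proof, where it rests on the inequality $d_D(i,j)\geq j-r$ having already been enforced for every input square --- precisely the inequality that is no longer enforced for the square $(i_0,j_0-i_0+1)$ after deletion. Whether a rank-$k$ dot in a row of $[i_0,j_0]$ can land inside $T_{(i_0,j_0-i_0+1)}$, and whether the while-loop of a strictly larger core interval can deposit a dot there, both need an actual computation; your proposed use of (E2) together with the greedy minimality of $\ell$ is the right raw material, but the argument is not carried out. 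Until these are closed, part (b) is a plan rather than a proof --- though, to be fair, the paper itself asserts minimality without proving it.
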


\section{Combinatorial characterization of essential sets} \label{subsection : comb of essential sets}
The goal of this section is to provide a combinatorial description of essential sets. We aim to introduce combinatorial conditions that guarantee a given set of labeled cyclic intervals can be viewed as the corners of the diagram of a bounded affine permutation.

The following result is a generalization of Theorem 4.1 from \cite{eriksson1996combinatorics} to the setting of bounded affine permutations and our definition of the rank of an essential set. We retain the name {``chess theorem''}, which stands for CHaracterization of ESsential Sets. The use of cyclic intervals and bounded affine permutations significantly alters the third condition in the theorem. The rank interpretation of essential sets allows for an elegant proof of the result. We divide its proof into Lemmas \ref{lemma : properties rank from e123} and \ref{lemma : bounded affine perm from e123}. \Cref{lemma : properties rank from e123} demonstrates that we can use a family $\mE^{\mathrm{r}}$ satisfying properties (E1), (E2), and (E3) to construct a function on cyclic intervals which shares many properties with the rank function of a positroid.

\begin{lemma}\label{lemma : properties rank from e123}
    Let $\mE^{\mathrm{r}}$ be a set of cyclic intervals labeled by a non-negative integer satisfying the properties {(E1), (E2), and (E3)} from \Cref{thm : prop ess sets}. Let $\CI_n$ denote the family of cyclic intervals of $[n]$ and $r: \CI_n \to \ZZ$ be the function defined by:
    \[ r([i,j]) = \min\{ |[i,j]|, r+|[i,j]\setminus I| \mid (r,I)\in \mE^{\mathrm{r}} \}. \]
    The function $r$ satisfies the following properties:
    \begin{itemize}
        \item[{\rm (r1)}] $r(I)=r$ for every $(r,I)\in \mE^{\mathrm{r}}$ and the minimum is uniquely achieved by $(r,I)$;
        \item[{\rm (r2)}] For every $[i,j]\in \CI_n$, $0\leq r([i,j]) \leq |[i,j]|$;
        \item[{\rm (r3)}] For every $[i,j]\in \CI_n\setminus [n]$, $r([i,j])\leq r([i-1,j]) \leq r([i,j])+1$ and $r([i,j])\leq r([i,j+1]) \leq r([i,j])+1$.
        \item[{\rm (r4)}] $[i,j]\in \CI_n$ is such that $r([i,j])=r([i+1,j])=r([i,j-1])=r([i-1,j])-1=r([i,j+1])-1$ if and only if $(r([i,j]),[i,j])\in \mE^{\mathrm{r}}$.
    \end{itemize}
\end{lemma}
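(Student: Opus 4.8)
The plan is to verify the four properties essentially in order, since each builds on the previous ones. For \textbf{(r1)}, fix $(r,I)\in\mE^{\mathrm r}$. Clearly $r(I)\le r$ by taking the term $(r,I)$ in the minimum (note $r<|I|$ by (E1), so $|I|$ is not the competitor that wins). To see the minimum is achieved \emph{only} by $(r,I)$, I would take any other $(r',J)\in\mE^{\mathrm r}$ and show $r'+|I\setminus J|\ge r$, with equality forcing $(r',J)=(r,I)$; this splits into the cases $J\subseteq I$, $I\subseteq J$, $I\cap J=\emptyset$ or one cyclic-interval overlap, and two cyclic-interval overlap. The containment cases are handled by (E2) (e.g. $J\subsetneq I$ gives $r' < r - |I\setminus J| \le \ldots$ wait, (E2) says $r-r' < |I\setminus J|$, i.e.\ $r' + |I\setminus J| > r$, strictly), the equality-of-intervals case is immediate, and the overlap cases are exactly what (E3)(1) and (E3)(2) are designed to control: the inequalities there, rearranged, say precisely that $r'+|I\setminus J|$ cannot drop below $r$. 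One also has to compare against $|I|$ itself and against $(0,\emptyset)$, both giving values $\ge r$ by (E1). This is the step I expect to be the main obstacle, because the cyclic overlap bookkeeping in (E3) is delicate and one must be careful that the auxiliary ``minimal/maximal essential set'' appearing in (E3) interacts correctly with the two intervals at hand.

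\textbf{(r2)} is then immediate: $r([i,j])\le |[i,j]|$ by definition, and $r([i,j])\ge 0$ because every competitor $r+|[i,j]\setminus I|$ is a sum of non-negative quantities (using $r\ge0$ from (E1)) and $|[i,j]|\ge 0$.

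For \textbf{(r3)}, I would argue monotonicity and the unit-step bound directly from the formula. For any $(r,I)$, we have $|[i,j]\setminus I|\le|[i-1,j]\setminus I|\le |[i,j]\setminus I|+1$ and likewise $|[i,j]|\le|[i-1,j]|=|[i,j]|+1$; taking minima over the same index set preserves both the inequality $r([i,j])\le r([i-1,j])$ and the bound $r([i-1,j])\le r([i,j])+1$ — the latter because whichever term $(r^\ast,I^\ast)$ (or $|[i,j]|$) achieves the minimum for $[i,j]$ gives a competitor for $[i-1,j]$ whose value exceeds it by at most $1$. The argument for $[i,j+1]$ is symmetric.

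Finally, \textbf{(r4)}. The ``if'' direction: given $(r([i,j]),[i,j])=(r,I)\in\mE^{\mathrm r}$, I would compute each of $r([i+1,j])$, $r([i,j-1])$, $r([i-1,j])$, $r([i,j+1])$ using (r1)–(r3). By (r3) each differs from $r$ by $0$ or $1$; I must show $r([i+1,j])=r([i,j-1])=r$ while $r([i-1,j])=r([i,j+1])=r+1$. For $r([i-1,j])=r+1$: if it were $r$, then some term $(r',J)$ (or $|[i-1,j]|$) achieves $r$ on $[i-1,j]$; then on $[i,j]\subseteq[i-1,j]$ the same term gives value $\le r$, and by (r1)'s uniqueness it must be $(r,I)$ itself with $I\subseteq[i-1,j]$, but also $r+|[i-1,j]\setminus I| = r$ forces $[i-1,j]\subseteq I \subseteq [i,j]$, a contradiction. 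The case $r([i,j+1])=r+1$ is symmetric. For $r([i+1,j])=r$: by (r3) it is $r$ or $r-1$; if $r-1$, some term $(r',J)$ achieves $r-1$ on $[i+1,j]$, hence gives value $\le r-1<r$ on $[i,j]$ — impossible since $r([i,j])=r$. Similarly $r([i,j-1])=r$. For the ``only if'' direction: suppose $[i,j]$ satisfies the five rank equalities and set $r:=r([i,j])$. Let $(r_0,I_0)$ (possibly $I_0 = [i,j]$, i.e.\ the term $|[i,j]|$) achieve the minimum defining $r([i,j])$; I claim $I_0=[i,j]$ and $(r,[i,j])\in\mE^{\mathrm r}$. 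If $i-1\in I_0$, then $|[i-1,j]\setminus I_0|=|[i,j]\setminus I_0|$, so $r([i-1,j])\le r_0+|[i-1,j]\setminus I_0| = r$, contradicting $r([i-1,j])=r+1$; hence $i-1\notin I_0$, and symmetrically $j+1\notin I_0$, so $I_0\subseteq[i,j]$. If $I_0\subsetneq[i,j]$, then (since $I_0\ne[i,j]$) the interval $I_0$ omits one of the ends $i$ or $j$ of $[i,j]$; say $i\notin I_0$, so $I_0\subseteq[i+1,j]$ and $|[i+1,j]\setminus I_0| = |[i,j]\setminus I_0| - 1$, giving $r([i+1,j])\le r_0 + |[i,j]\setminus I_0| - 1 = r-1$, contradicting $r([i+1,j])=r$; the case $j\notin I_0$ is symmetric. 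Hence $I_0=[i,j]$, meaning the minimum is achieved by the term $|[i,j]|$, i.e.\ $r=|[i,j]|$ — but then $r([i+1,j])=r$ while $r([i+1,j])\le|[i+1,j]|=|[i,j]|-1 = r-1 < r$, a contradiction, \emph{unless} in fact the minimum is achieved by a genuine essential set $(r,I)\in\mE^{\mathrm r}$ with $I=[i,j]$. So the only consistent possibility is $(r,[i,j])\in\mE^{\mathrm r}$, which is what we wanted; and by (r1) then $r([i,j])=r$, closing the argument.
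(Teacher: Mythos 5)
Your plan follows the same route as the paper's proof: (r1) by comparing the term of $(r,I)$ against every other competitor, split into containment versus overlap cases, with (E2) handling containment and (E3) handling overlap; (r2) trivially; (r3) by termwise comparison of the two minima; (r4) by analysing which competitor achieves the minimum and invoking the uniqueness clause of (r1). Two points deserve attention. First, in (r1) the single-overlap case is exactly where the work lies, and it is not a mere rearrangement of (E3): one must introduce the auxiliary minimal essential set $(r_3,I_3)$ containing $I\cup J$ and maximal $(r_4,I_4)$ contained in $I\cap J$, combine the (E3) inequality $r+r'\ge r_3+r_4+|(I\cap J)\setminus I_4|$ with the bound $r_4+|I\setminus I_4|\ge r'+|I\setminus J|$ (which itself uses the assumption that $(r',J)$ attains the minimum), and conclude $r\ge r_3$, contradicting (E2) because $I\subsetneq I_3$; the two-interval-overlap case is dispatched separately via $|I\setminus J|=|[n]\setminus J|\ge k-r'$ from (E1). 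Your sentence that the (E3) inequalities ``rearranged, say precisely'' what is needed conceals all of this, and it is the crux of the lemma.

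Second, in the ``if'' direction of (r4) your justification of $r([i+1,j])=r$ reverses an inequality: a competitor's value on the larger interval $[i,j]$ is at least, not at most, its value on $[i+1,j]$, so a term achieving $r-1$ on $[i+1,j]$ yields a value in $\{r-1,r\}$ on $[i,j]$ and does not immediately contradict $r([i,j])=r$. The repair is the one the paper uses: by the uniqueness in (r1), every competitor other than $(r,[i,j])$ has value at least $r+1$ on $[i,j]$, hence at least $r$ on $[i+1,j]$, while $(r,[i,j])$ itself and the cardinality term also give at least $r$ there; combined with (r3) this forces $r([i+1,j])=r$. With these two repairs your argument goes through; your explicit treatment of the cardinality competitor in the ``only if'' direction of (r4) is in fact more careful than the paper's.
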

\begin{proof}
 {\bf (r1):}   Let $(r,I)\in \mE^{\mathrm{r}}$. By property (E1), we have $r<|I|$. Suppose for contradiction that there exists $(r_2,I_2)\in \mE^{\mathrm{r}}$ such that $r_2+|I_2\setminus I| \leq r$. By (E2), $I_2$ neither contains nor is contained by $I$. Hence, let $(r_3,I_3)$ be a minimal set with the smallest label in $\mE^{\mathrm{r}}$ containing $I\cup I_2$, and let $(r_4,I_4)$ be a maximal set with the largest label contained in $I\cap I_2$. Suppose $I$ and $I_2$ intersect in a cyclic interval. By (E3), $r+r_2 \geq r_3 + r_4 + |(I\cap I_2)\setminus I_4|$. Moreover, $r_4 + |(I\cap I_2)\setminus I_4| + |I\setminus I_2| = r_4 + |I\setminus I_4| \geq r_2 + |I\setminus I_2|$, where the last equality holds by assuming that the minimum of the function $r$ is achieved by $(r_2,I_2)$. Then we can write $r+r_2+|I\setminus I_2| \geq r_3 + r_2 + |I\setminus I_2|$, hence $r \geq r_3$. Since $I\subsetneq I_3$, by  (E2), this is not possible.
Finally, suppose $I$ and $I_2$ intersect in two cyclic intervals. Then $r_2+|I\setminus I_2| = r_2+|[1,n]\setminus I_2| \geq r_2 +k-r_2+k >r$, where we used (E2). It follows that for every $(r_2,I_2)\in \mE^{\mathrm{r}}\setminus { (r,I)}$, $r_2+|I\setminus I_2|>r$, hence {(r1)} holds.

\medskip

 {\bf (r2):}      Property  {(r2)} holds by definition of the function $r$ and condition  {(E1)} on the family $\mE^{\mathrm{r}}$.

\medskip

  {\bf (r3):}     Let $[i,j]$ be a cyclic interval. We will prove  {(r3)} by considering some cases. 
  \begin{itemize}
      \item If $r([i,j])$ and $r([i-1,j])$ are given by the cardinality of the sets,  {(r3)} holds trivially. 
  \item Suppose $r([i,j]) = |[i,j]|$ and $r([i-1,j]) = r + |[i-1,j]\setminus I|$ for some $(r,I)\in \mE^{\mathrm{r}}$. Then $i-1 \in I$ and $r+|[i,j]\setminus I| \geq |[i,j]|$, hence $r \geq |I\cap [i,j]|$. It follows that $r([i-1,j]) = r + |[i-1,j]\setminus I| \geq |I\cap [i,j]| + |[i-1,j]\setminus I| = |[i,j]| = r([i,j])$. Moreover $r([i-1,j]) \leq |[i-1,j]| = r([i,j])+1$. 
  \item Finally, suppose $r([i,j]) = r + |[i,j]\setminus I|$ for some essential set $(r,I)$. Then clearly $r([i-1,j]) \leq r([i,j])+1$. If $r([i-1,j]) = |[i-1,j]|$ then $r([i-1,j]) \geq r([i,j])+1$. If $r([i-1,j]) = r' + |[i-1,j]\setminus I'|$ for some essential set $(r',I')$ then $r + |[i,j]|\setminus I| \leq r' + |[i,j]\setminus I'| \leq r([i-1,j])$. 
  \end{itemize}
  In an analogous way it is possible to show that $r([i,j])\leq r([i,j+1]) \leq r([i,j])+1$, hence  {(r3)} holds. 

\medskip
  {\bf (r4):}     Suppose $[i,j]\subsetneq [n]$ is such that $r([i,j])=r([i+1,j])=r([i,j-1])=r([i-1,j])-1=r([i,j+1])-1$ holds. Let $(r,I)\in \mE^{\mathrm{r}}$ such that $r([i,j])=r+|[i,j]\setminus I|$. Note that by assumption, $i-1,j+1 \not\in I$, hence $I\subseteq [i,j]$. Suppose $I\neq [i,j]$ and assume that $i\not\in I$. Then $r([i+1,j])\leq r + |[i+1,j]\setminus I|< r+|[i,j]\setminus I|=r([i,j])$, a contradiction. % the assumption. 
  In the same way we need to have that $j\in I$. Since $I$ is a cyclic interval, it follows that $I=[i,j]$. By  {(r1)}, $r([i,j])=r$, hence $(r([i,j]),[i,j])\in \mE^{\mathrm{r}}$. 

   Suppose now $(r,[i,j])\in \mE^{\mathrm{r}}$. By  {(r1)}, $r([i,j]) = r$ and the minimum is uniquely achieved by $(r,I)$. Hence, for every $(r_2,I_2)\in\mE^{\mathrm{r}}$, $r_2+|[i,j]\setminus I_2|\geq r+1$ and $r([i+1,j])\geq r$, $r([i,j-1])\geq r$. Since they are both contained in $[i,j]$, it follows that $r([i,j])=r([i+1,j])=r([i,j-1])=r$. Moreover, $r_2+|[i-1,j]\setminus I_2|\geq r+1$ and $r_2+|[i,j+1]\setminus I_2|\geq r+1$, and $r+|[i-1,j]\setminus [i,j]|=r+1=r+|[i,j+1]\setminus [i,j]|$, hence we can conclude that {(r4)} holds.
\end{proof}

The following result shows that we can use the function defined in \Cref{lemma : properties rank from e123} to construct a bounded affine permutation, which is a positroid. 

\begin{lemma}\label{lemma : bounded affine perm from e123}
    Let $\mE^{\mathrm{r}}$ be a family of labeled cyclic intervals satisfying  {(E1), (E2), and (E3)} from \Cref{thm : prop ess sets}. 
    Let $r: \CI_n \to \ZZ$ be the function defined in \Cref{lemma : properties rank from e123} and let $\pi:\ZZ \to \ZZ$ be defined as
    \[ \pi(i)=\min\{ j\geq i \mid r([i,j]) = r([i+1,j]) \}, \]
    where $[i,j] =\{i \mod n, \dots, j \mod n \}$ for every $i\leq j$. Then $\pi$ is a bounded affine permutation.
\end{lemma}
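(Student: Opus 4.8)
The plan is to verify, in turn, the three defining properties of a bounded affine permutation: the bounds $i\le\pi(i)\le i+n$, periodicity $\pi(i+n)=\pi(i)+n$, and bijectivity. The first two are immediate. Since $\{i+1,\dots,i+n\}$ and $\{i,i+1,\dots,i+n\}$ are, respectively, $n$ and $n+1$ consecutive integers, the cyclic intervals $[i+1,i+n]$ and $[i,i+n]$ both equal the whole ground set $[1,n]$, so by (r1) we have $r([i,i+n])=r([i+1,i+n])=k$; hence $j=i+n$ lies in $\{j\ge i\mid r([i,j])=r([i+1,j])\}$, so $\pi(i)$ is a well-defined integer with $i\le\pi(i)\le i+n$. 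For periodicity, a cyclic interval depends only on the residues mod $n$ of its endpoints, so $[i+n,j]=[i,j-n]$ and $[i+n+1,j]=[i+1,j-n]$ for all $j$; thus $j\mapsto j-n$ carries $\{j\ge i+n\mid r([i+n,j])=r([i+n+1,j])\}$ bijectively onto $\{j'\ge i\mid r([i,j'])=r([i+1,j'])\}$, and comparing minima gives $\pi(i+n)=\pi(i)+n$.

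The substance is bijectivity. By periodicity it suffices to show $\pi$ is injective on $\ZZ$: then $\pi(1),\dots,\pi(n)$ are pairwise incongruent mod $n$, hence form a complete residue system, and periodicity promotes injectivity to a bijection of $\ZZ$. Suppose $\pi(i_1)=\pi(i_2)=m$ with $i_1\ne i_2$; as $\pi(i_1)=\pi(i_2)$ forces $i_1\equiv i_2\pmod n$ to fail and $\pi(i)=m$ forces $m-n\le i\le m$, the indices lie in one length-$n$ window and we may take $i_1<i_2$. The case $i_1=m-1$ (which forces $i_2=m$) is excluded by (r3), since $\pi(m-1)=m$ gives $r(\{m-1\})=1$ while $\pi(m)=m$ makes $r(\{m-1,m\})=r(\{m\})=0$, contradicting $r(\{m-1\})\le r(\{m-1,m\})$; so $i_1+1\le m-1$. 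From $\pi(i_1)=m$ we have $r([i_1,m])=r([i_1+1,m])$ and, since $m-1<\pi(i_1)$, $r([i_1,m-1])=r([i_1+1,m-1])+1$; subtracting and using that by (r3) both $r([i_1,m])-r([i_1,m-1])$ and $r([i_1+1,m])-r([i_1+1,m-1])$ lie in $\{0,1\}$ forces $r([i_1+1,m])=r([i_1+1,m-1])+1$. The same argument at $i_2$ gives $r([i_2,m])=r([i_2,m-1])$. Now apply the submodular inequality for $r$ to the (linearly overlapping) windows $A=[i_1+1,m-1]$ and $B=[i_2,m]$, for which $A\cup B=[i_1+1,m]$ and $A\cap B=[i_2,m-1]$ (empty when $i_2=m$):
\[ r([i_1+1,m-1])+r([i_2,m])\ \ge\ r([i_1+1,m])+r([i_2,m-1]). \]
Substituting the two identities, the right-hand side equals $r([i_1+1,m-1])+1+r([i_2,m-1])$, giving $0\ge 1$ (and in the degenerate case $i_2=m$ it gives $r(\{m\})\ge1$, contradicting that $\pi(m)=m$ makes $m$ a loop). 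This contradiction establishes injectivity, hence bijectivity, and completes the proof.

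The only non-routine ingredient is the submodular inequality $r(A)+r(B)\ge r(A\cup B)+r(A\cap B)$ for the function $r$, in the case where $A$ and $B$ overlap in a single cyclic interval. Here $r$ is defined as a minimum over $\mE^{\mathrm{r}}$ of affine functions of the argument interval (plus the cardinality term), and submodularity of such a minimum is not automatic for cyclic intervals — this is precisely the content that property (E3), with its two cases for how cyclic intervals overlap, is designed to deliver. So before the argument above I would establish this submodularity from the definition of $r$ together with (E1)–(E3), by a case analysis paralleling those used to prove (r1) and (r4); I expect that step to be the main obstacle, while everything else reduces to bookkeeping with (r1)–(r3). (Alternatively, bijectivity can be obtained from a telescoping identity: monotonicity of $r$ under enlarging a window — itself a consequence of submodularity — gives $r([i,j])-r([i+1,j])=1$ exactly when $\pi(i)>j$, and summing over the left endpoints of a full window yields $\#\{i\mid\pi(i)>j\}=k$ for every $j$; taking successive differences in $j$ then forces every integer to have a unique $\pi$-preimage.)
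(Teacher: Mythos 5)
Your treatment of the bounds, periodicity, and the reduction of bijectivity to injectivity is correct and in fact more carefully spelled out than in the paper, and the two identities $r([i_1+1,m])=r([i_1+1,m-1])+1$ and $r([i_2,m])=r([i_2,m-1])$ are correctly extracted from the minimality in the definition of $\pi$ together with (r3). However, there is a genuine gap: the entire contradiction rests on the submodular inequality $r(A)+r(B)\ge r(A\cup B)+r(A\cap B)$ for overlapping cyclic intervals, and this is neither among the properties (r1)--(r4) supplied by \Cref{lemma : properties rank from e123} nor proved anywhere in your argument --- you explicitly defer it and acknowledge it as ``the main obstacle.'' This deferral is not a routine verification. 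Since $r$ is defined as a minimum of modular functions $S\mapsto \rho+|S\setminus I|$, submodularity of the minimum fails in general; rescuing it requires (E3), and the case analysis must be run over the relative positions of the two \emph{minimizing} essential intervals $I_1,I_2$ (nested, disjoint, overlapping in one or in two cyclic intervals), not over $A$ and $B$. Moreover (E3) produces inequalities involving a minimal essential set containing $I_1\cup I_2$ and a maximal one contained in $I_1\cap I_2$, which must then be converted into bounds on $r(A\cup B)$ and $r(A\cap B)$ via nontrivial counting of set differences --- this is exactly the bookkeeping that occupies Cases 2.1 and 2.2 of the paper's proof of \Cref{thm : chess theorem}. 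A quick check confirms there is no shortcut for your specific instance: knowing that the minimizer $(\rho,J)$ for $[i_2,m]$ must contain $m$ does not by itself prevent a different minimizer $(\rho',J')$ with $m\notin J'$ from achieving $r([i_1+1,m-1])$, and ruling that out is precisely the (E3) comparison you have not carried out. So as written the proposal is a plausible plan rather than a proof.

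For comparison, the paper's own argument avoids submodularity altogether. From $\pi(i)=\pi(j)=\ell$ it deduces, via the minimality of $\ell$ and (r3), that both $[i,\ell]$ and $[j,\ell]$ satisfy $r([\,\cdot\,,\ell])=r([\,\cdot\,+1,\ell])=r([\,\cdot\,,\ell-1])$, hence (by an (r4)-type argument) each is contained in an essential set of the same $r$-value; the contradiction then comes from (E2) applied to these nested essential sets together with the drop $r([i+1,\ell-1])=r([i,\ell])-1$. If you want to keep your submodularity-based route, you must supply the full (E1)--(E3) derivation of that inequality; otherwise the more economical path is the paper's, which only invokes (E2) on a pair of nested essential sets.
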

\begin{proof}
  The function $\pi$ satisfies the properties such that for every $i\in \ZZ$, $i\leq \pi(i)\leq i+n$ and $\pi(i+kn)=\pi(i)+kn$ for every $k \in \ZZ$ by construction. 
  Hence, it remains to prove that $\pi$ is a bijection.

    Suppose $i,j\in\ZZ$, with $i<j$, are such that $\pi(i)=\pi(j)=\ell$. Then $r([j,\ell])=r([j+1,\ell]) > r([j+1,\ell-1])$. By property  {(r3)} of the function $r$, $r([j+1,\ell-1])= r([j,\ell])-1$. We have that $r([j,\ell])\geq r([j,\ell-1])\geq r([j+1,\ell-1])= r([j,\ell])-1$. Suppose the first inequality is strict, then $r([j,\ell-1])= r([j+1,\ell-1])$, contradicting the minimality assumption on $\ell$. Hence $r([j,\ell])= r([j,\ell-1])$. In the same way, $r([i,\ell])=r([i,\ell-1])$. Hence there are $(r_i,I),(r_j,J) \in \mE^{\mathrm{r}}$ such that $[i,\ell]\subseteq I$, $[j,\ell]\subseteq J$ and $r_i=r([i,\ell])$, $r_j = r([j,\ell])$. By definition of $\pi(i)=\ell$, it follows that $r([i+1,\ell-1])= r_i-1$. Suppose $r([i+1,\ell-1]) = r_2+ |[i+1,\ell-1]\setminus I_2|$ for some $(r_2,I_2)\in \mE^{\mathrm{r}}$, then $I_2$ is contained in $[i+1,\ell-1]$ since if I add $i$ or $\ell$, the value of $r$ strictly increases. Then, $r_2+ |[i+1,\ell-1]\setminus I_2| \geq r_2+|I \setminus I_2| \geq r$ by property  {(E2)}, contradicting the assumption. Finally, note that $|[i+1,\ell-1]| \geq |[i+1,\ell-1]| + r' - |[j,\ell-1]| \geq r' +|[i+1,\ell-1]\setminus J| = r' +|[i+1,\ell]\setminus J| \geq r$. Hence, we get a contradiction, and the function $\pi$ is injective. Moreover, since $\pi$ is bounded and periodic, it follows that $\pi$ is a bijection.
\end{proof}

Finally, by combining the construction from Lemmas~\ref{lemma : properties rank from e123} and \ref{lemma : bounded affine perm from e123}, we obtain the characterization of essential sets of a positroid. This provides an axiomatic description of positroids from this perspective.

\begin{theorem}[Chess theorem]\label{thm : chess theorem}
    Let $\mE^{\mathrm{r}}$ be a set of cyclic intervals labeled by a non-negative integer. Then $\mE^{\mathrm{r}}$ is the ranked essential family of a positroid if and only if $\mE^{\mathrm{r}}$ satisfies the properties  {(E1), (E2), and (E3)} from \Cref{thm : prop ess sets}.
\end{theorem}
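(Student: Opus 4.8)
The forward implication is precisely \Cref{thm : prop ess sets}, so only the converse requires argument. Assume that $\mE^{\mathrm{r}}$ satisfies (E1), (E2) and (E3). By \Cref{lemma : properties rank from e123} the associated function $r\colon\CI_n\to\ZZ$, $r([i,j])=\min\{\,|[i,j]|,\ s+|[i,j]\setminus I|\ :\ (s,I)\in\mE^{\mathrm{r}}\,\}$, satisfies properties (r1)--(r4); and by \Cref{lemma : bounded affine perm from e123} the map $\pi(i)=\min\{\,j\geq i\mid r([i,j])=r([i+1,j])\,\}$ is a bounded affine permutation, which we view as a positroid $\mP$. The plan is to show that the rank function of $\mP$ on cyclic intervals coincides with $r$, and then to read off $\mE^{\mathrm{r}}(\mP)=\mE^{\mathrm{r}}$ from (r4).

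\emph{From $\rank_\mP=r$ to the statement.} Granting $\rank_\mP([i,j])=r([i,j])$ for every cyclic interval $[i,j]$: for $i\neq j$, \Cref{thm : ess set and rank matrix} says that $(r,[i,j])\in\mE^{\mathrm{r}}(\mP)$ exactly when $r=\rank_\mP([i,j])=\rank_\mP([i+1,j])=\rank_\mP([i,j-1])=\rank_\mP([i-1,j])-1=\rank_\mP([i,j+1])-1$, and analogously, with the appropriate degeneracy, when $i=j$. Substituting $\rank_\mP=r$, these are exactly the equalities appearing in (r4), so by (r4) they hold if and only if $(r([i,j]),[i,j])\in\mE^{\mathrm{r}}$, and then $r([i,j])=r$ by (r1). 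Finally $(k,[1,n])$ lies in $\mE^{\mathrm{r}}$ by (E1) with $r([1,n])=k$ by (r1), and in $\mE^{\mathrm{r}}(\mP)$ by definition of the ranked essential family, with $\rank_\mP([1,n])=r([1,n])=k$ the rank of $\mP$. Hence $\mE^{\mathrm{r}}(\mP)=\mE^{\mathrm{r}}$.

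\emph{Proving $\rank_\mP=r$.} Fix a cyclic interval and lift it so that it reads $[i,j]$ with $i\leq j$; list its elements in $<_i$-order as $i=a_1<_i\cdots<_i a_m=j$. Peeling off the leftmost element of a cyclic interval and using \Cref{prop : properties positroid permutations} and \Cref{prop : rank dots ess sets}, the left-hand side telescopes:
\[ \rank_\mP([i,j])=\sum_{t=1}^{m}\bigl(\rank_\mP([a_t,j])-\rank_\mP([a_{t+1},j])\bigr)=\#\{\,t\ :\ \pi(a_t)>_{a_t}j\,\}, \]
with the convention that the last interval in the telescope is empty; likewise $r([i,j])=\sum_{t=1}^{m}\bigl(r([a_t,j])-r([a_{t+1},j])\bigr)$, each summand lying in $\{0,1\}$ by (r3) (with $r(\emptyset)=0$). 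So it suffices to prove, for each element $a$ of the interval, the pointwise equivalence $\pi(a)>_{a}j\iff r([a,j])=r([a+1,j])+1$. The implication ``$\Rightarrow$'' is immediate: by definition of $\pi$, $\pi(a)>_{a}j$ forces $r([a,\ell])\neq r([a+1,\ell])$ for every $\ell$ in $[a,j]$, in particular at $\ell=j$, where by (r3) the gap is $1$. Summing this direction already yields $\rank_\mP([i,j])\leq r([i,j])$ for every cyclic interval.

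\emph{The main obstacle.} The reverse implication ``$\Leftarrow$'' — equivalently the inequality $\rank_\mP\geq r$ — is the heart of the proof, and I expect it to be the only non-routine point. It is equivalent to the monotonicity statement: for fixed $a$, the $\{0,1\}$-valued function $\ell\mapsto r([a,\ell])-r([a+1,\ell])$ is non-increasing along $<_a$; i.e.\ once adjoining $a$ to an initial cyclic interval starting at $a+1$ fails to raise $r$, it keeps failing as the interval grows. By (r3) it is enough to pass from $\ell$ to its $<_a$-successor $\ell'$, and the dangerous case is $r([a,\ell])=r([a+1,\ell])=r([a+1,\ell'])=s$ together with $r([a,\ell'])=s+1$. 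Expanding the minimizers of $r([a,\ell])$, $r([a,\ell'])$ and $r([a+1,\ell'])$ from the defining formula of $r$ (exactly as in the proof of (r1)) shows that the first is an essential set containing $a$, the last one avoids $a$, and the two overlap along $[a+1,\ell]$ in incompatible ways; feeding these overlapping intervals into (E3) — i.e.\ the submodular inequality $r(A)+r(B)\geq r(A\cup B)+r(A\cap B)$ for cyclic intervals whose union and intersection are again cyclic, which is precisely the content of (E3) — contradicts $r([a,\ell'])=s+1$. The degenerate intervals ($[a,a]$, the empty interval, $[1,n]$) are handled directly by (E1) and the defining formula of $r$; in particular the base case $a=j$ of the pointwise equivalence needs no monotonicity. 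With $\rank_\mP=r$ in hand, the second paragraph above finishes the proof, so the skeleton is: Lemmas~\ref{lemma : properties rank from e123} and \ref{lemma : bounded affine perm from e123}, then $\rank_\mP=r$, then (r4) with \Cref{thm : ess set and rank matrix}, the single delicate ingredient being $\rank_\mP\geq r$.
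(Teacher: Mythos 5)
Your overall skeleton coincides with the paper's: forward direction by \Cref{thm : prop ess sets}, converse via Lemmas~\ref{lemma : properties rank from e123} and \ref{lemma : bounded affine perm from e123}, then the identity $\rank_\mP=r$, then (r4) together with \Cref{thm : ess set and rank matrix}. Where you genuinely diverge is in the proof of $\rank_\mP=r$. The paper argues by induction on $|[i,j]|$, comparing $[i,j]$ with $[i+1,j]$ and $[i,j-1]$ and splitting on whether $r([i,j])$ is smaller or larger than $\rank_\mP([i,j])$, with the hard case further split on $\pi(j)=j$ versus $\pi(j)>j$. You instead telescope both rank functions along the interval via \Cref{prop : properties positroid permutations}, reduce to the pointwise identity $[\pi(a)>_a j]=r([a,j])-r([a+1,j])$, observe that one direction (hence $\rank_\mP\leq r$) is immediate from the definition of $\pi$ and (r3), and isolate the remaining content as a monotonicity statement: the marginal $\ell\mapsto r([a,\ell])-r([a+1,\ell])$ is non-increasing. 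This is a cleaner decomposition --- it makes the inequality $\rank_\mP\leq r$ free (this is essentially the paper's Case 1) and phrases the hard direction purely as a submodularity-type property of $r$, with no reference to $\pi$. The trade-off is that your key step is only sketched: verifying the monotonicity requires pinning down that the minimizer for $r([a,\ell])$ is an essential set containing $a$ but not $\ell'$, that the minimizer for $r([a+1,\ell'])$ contains $\ell'$ but not $a$, ruling out the cardinality terms $|[\cdot,\cdot]|$ as minimizers, and then running (E3) through the cases where these two intervals are disjoint, meet in one cyclic interval, or meet in two (union equal to $[n]$, where one falls back on (E1)--(E2) as in the proof of (r1)). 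That case analysis is exactly the work the paper carries out in its Cases 2.1 and 2.2, so nothing is avoided --- it is relocated --- but the relocation is legitimate and the resulting contradiction with $r([a,\ell'])=s+1$ does come out of the same (E3) computation. I would accept this as a correct alternative organization, with the caveat that the ``main obstacle'' paragraph must be expanded into the full three-way intersection analysis before the proof is complete.
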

\begin{proof}
    If $\mE^{\mathrm{r}}$ is the ranked essential family of a positroid, then  {(E1), (E2), and (E3)} hold as shown in \Cref{thm : prop ess sets}.

    Suppose now $\mE^{\mathrm{r}}$ is family of labeled cyclic intervals satisfying  {(E1), (E2), and (E3)}. Let  $r: \CI_n \to \ZZ$ be the function defined in \Cref{lemma : properties rank from e123} and let $\pi:\ZZ \to \ZZ$ be the bounded affine permutation constructed in \Cref{lemma : bounded affine perm from e123}. We want to prove that the positroid $\mP$ associated to $\pi$ is such that $\rk([i,j])=r([i,j])$ for each cyclic interval $[i,j]$. Then, by \Cref{thm : ess set and rank matrix} and property  {(r4)} from \Cref{lemma : properties rank from e123}, it follows that $\mE^{\mathrm{r}}$ is the ranked essential family of the positroid $\mP$.
    We will prove by induction on the size of the cyclic interval that the rank `$\rk$' and `$r$' agree on each cyclic interval. For $i\in [n]$, we have
    \begin{align*}
        \rk([i,i])= \begin{cases}
            0 \quad & \text{if } \pi(i)=i \quad \iff \quad r([i,i])=0\\
            1 \quad & \text{if } \pi(i)>i \quad \iff \quad r([i,i])=1.
        \end{cases}
    \end{align*}
   Suppose now that $\rk$ and $r$ agree on each cyclic interval on size smaller than $m$ and suppose by contradiction $[i,j]$ is such that $|[i,j]|=m$ and $r([i,j])\neq \rk([i,j])$. We will consider the following cases:

    \medskip
      {\bf Case 1.} If $r([i,j])<\rk([i,j])$, since both $r$ and $\rk$ are satisfying the property  {(r3)} from \Cref{lemma : properties rank from e123}, $r([i,j])=r([i+1,j])$, hence $\pi(i)\leq j$. Moreover, $\rk([i,j])=\rk([i+1,j])+1$, i.e. $i$ is independent of $i=1,\dots, j$, hence $\pi(i)>j$ and we get a contradiction. 
     \medskip
    
     {\bf Case 2.} If $r([i,j])>\rk([i,j])$, since $\rk([i,j])=\rk([i+1,j])$, $i$ is dependent on $i+1,\dots,j$, hence $\pi(i)< j$. Again, consider the two distinct cases.
    Suppose then that $r([i,j])>\rk([i,j])$. Note that since $\rk([i,j])=\rk([i+1,j])$, $i$ is dependent on $i+1,\dots,j$, hence $\pi(i)< j$. 
    \medskip

    {\bf Case 2.1.} Suppose $\pi(j) = j$, then $r([j,j])=0$ hence there is $(0,I)\in \mE^{\mathrm{r}}$ such that $j\in I$. Suppose $r([i,j-1])=r'+|[i,j-1]\setminus I'|$ for some $(r',I')\in \mE^{\mathrm{r}}\cup \{(0,\emptyset)\}$. First assume that $I\cap I'\neq \emptyset$. Let $(r_3,I_3)$ be the minimal element in $\mE^{\mathrm{r}}$ containing $I\cup I'$ and let $(r_4,I_4)$ be the maximal element from $\mE^{\mathrm{r}}$ contained in $I\cap I'$. Then by  {(E3.2)} $0+r' \geq r_3+r_4+ |(I\cap I')\setminus I_4|$. Then $r' +|[i,j-1]\setminus I'| \geq r_3+r_4+ |(I\cap I')\setminus I_4|+|[i,j-1]\setminus I'| \geq r_3 + |[i,j-1]\setminus I_3|$. Hence $r([i,j-1])= r_3 + |[i,j-1]\setminus I_3| = r([i,j])$, where the last equality holds since $j\in I_3$. Assume now that $I\cap I'\neq \emptyset$. Similarly, let $(r_3,I_3)$ be the minimal element in $\mE^{\mathrm{r}}$ containing $I\cup I'$ and let $(r_4,I_4)$ be the maximal element from $\mE^{\mathrm{r}}$ contained in the minimal cyclic interval containing $I\cup I'$ in $[i,j]$ $C$, minus $I\cup I'$. Then by  {(E3.1)}, $0 + r' \geq r_3 - r_4- |C\setminus I_4|$. Then $r' +|[i,j-1]\setminus I'| \geq r_3 - r_4- |C\setminus I_4|+|[i,j-1]\setminus I'| \geq r_3 + |[i,j-1]\setminus I_3|$ and as before we get that $r([i,j])= r([i,j-1])$ and we get that $r$ and $\rank$ agree on $[i,j]$.
    \medskip

    {\bf Case 2.2.} Suppose $\pi(j)>j$. Since $\rk([i,j-1]) = \rk([i,j])$, by \Cref{prop : properties positroid permutations},  there is some $h \in [i+1,j-1]$ such that $\pi(h) = j$, i.e. $r([h,j]) = r([h+1,j]) = r([h, j-1])$. By  {(r4)}, there exists some $(r,I)$ such that $[h,j]\subseteq I$ and $r=r([h,j])=\rk([h,j])$. Let $(r',I')\in \mE^{\mathrm{r}}\cup \{(0,\emptyset)\}$ be such that $r([i,j-1]) = r' + |[i,j-1]\setminus I'|$. Suppose $I\cap I'\neq \emptyset$, the other case will be equivalent. Let $(r_3,I_3)$ be the minimal element in $\mE^{\mathrm{r}}$ containing $I\cup I'$ and let $(r_4,I_4)$ be the maximal element from $\mE^{\mathrm{r}}$ contained in $I\cap I'$. Then by  {(E3)}, $r' + |[i,j-1]\setminus I'| = r' + |[i,j-1]\setminus (I\cup I')| + |I\setminus I'| \geq r' + |[i,j-1]\setminus I_3| +|I\setminus I'| \geq r+r' - |I\cap I'| + |[i,j-1]\setminus I_3| \geq r_3 + |[i,j-1]\setminus I_3| + r_4 + |(I\cap I')\setminus I_4| - |I\cap I'| \geq r_3 + |[i,j-1]\setminus I_3|$. Hence $r([i,j-1]) = r_3 + |[i,j-1]\setminus I_3| = r_3 + |[i,j]\setminus I_3| = r([i,j])$, since $j\in I_3$. 
\end{proof}

\begin{remark}
    If $\mE^{\mathrm{r}}$ is a family of labeled cyclic interval satisfying  {(E1), (E2), and (E3)} we can recover the bounded affine permutation of the positroid defined by $\mE^{\mathrm{r}}$ by applying \Cref{alg : essential to permutation} to $E^{\mathrm{r}}= \{ (r,(i,j)) \mid (r,[i,j+i-1])\in \mE^{\mathrm{r}} \}$.
\end{remark}

\section{Realization spaces of positroids} \label{section : positroid cells}
We now turn our attention to the realization space of a positroid in the totally non-negative Grassmannian and the Grassmannian.
Recall that, given a field $\mathbb{K}$, the Grassmannian $\Gr(k, n)$ is the space of all $k$-dimensional linear subspaces of $\mathbb{K}^n$. Any point $V$ in $\Gr(k, n)$ can be represented by a $k\times n$ matrix with entries in $\mathbb{K}$.
Let $M=(m_{ij})$ be a $k\times n$ matrix of indeterminates. For a subset $I = \{i_1,\ldots,i_k\} \in \binom{[n]}{k}$, let $M_I$ denote the $k\times k$ submatrix of $M$ with the column indices $i_1,\ldots,i_k$.
The \emph{Pl\"ucker coordinates} of $V$ are $p_I(V) = \text{det}(M_I)$ for each $I \in\binom{[n]}{k}$. The Pl\"ucker coordinates do not depend on the choice of matrix $M$, up to simultaneous rescaling by a non-zero constant, and they determine the \emph{Pl\"ucker embedding} of $\Gr(k,n)$ into $\mathbb{P}^{\binom{n}{k}-1}$.
Moreover, any point $V\in\Gr(k,n)$ can be represented as $\text{span}\lbrace v_1,\ldots,v_k\rbrace$ for some $\mathbb{K}$-vector space basis $\{v_1,\dots,v_k\}$.

\subsection{Positroid cells}

Let $\mathbb{K}=\mathbb{R}$. The \emph{totally non-negative Grassmannian} $\Gr_{\geq 0}(k,n)$ is the subset of $\Gr(k,n)$ consisting of points whose {non-zero} Pl\"ucker coordinates have all the same sign. Fix a positroid $\mP$ on $[n]$ of rank $k$, defined via its basis $\mB$. Then the \emph{positroid cell} of $\mP$ is the set:
\[ \mS_{\mP} = \{ M\in \Gr_{\geq 0}(k,n) \mid M_I=0 \text{ if and only if } I\not\in \mB\}.
\]
Each positroid cell $\mS_{\mP}$ is a topological cell \cite{postnikov2006total}, and moreover, the positroid cells of $\Gr_{\geq 0}(k,n)$ glue together to form a CW-decomposition of $\Gr_{\geq 0}(k,n)$, as shown in~\cite{postnikov2009matching}. 

\medskip
In the remainder of this section, we study positroid cells. In particular, we are interested in computing the dimension of the cell corresponding to a given positroid. Combinatorial objects in bijection with positroids offer a way to compute the dimension of such cells. Specifically, it is possible to compute the dimension of a positroid cell using bounded affine permutations, as outlined in the following result.

\begin{theorem}[\protect{\cite[Theorem~3.16]{knutson2013positroid}}]\label{thm : codim = length}
    Let $\mP$ be a rank $k$ positroid on $[n]$ and let $\pi$ be the associated bounded affine permutation. The positroid cell $\mS_{\mP}$ has codimension $\ell(\pi)$ inside 
    $\Gr_{\geq 0}(k,n)$, where
    \[ \ell(\pi) = |\{ (i,j)\mid i\in [n], i<j\leq i+n \text{ and } \pi(i)>\pi(j) \}| \]
    is called \emph{length} of $\pi$ and it is equal to the number of inversions in $\pi$.
\end{theorem}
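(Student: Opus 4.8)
This is \cite[Theorem~3.16]{knutson2013positroid}, so in the paper it can simply be quoted; the plan below is how I would recover it within the diagrammatic language of \Cref{section : ess set and permutation}, which is also the shape in which $\ell(\pi)$ will later be read off the ranked essential family. The geometric input is Postnikov's parametrization of positroid cells: $\mS_\mP$ is homeomorphic to $\RR_{>0}^{d}$, where $d$ is the number of $+$-entries in the Le diagram of $\mP$ (equivalently, the number of bounded faces of a reduced plabic graph for $\mP$), while $\dim\Gr_{\geq 0}(k,n)=k(n-k)$. Thus the theorem reduces to the combinatorial identity $d=k(n-k)-\ell(\pi)$.

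The combinatorial heart of the argument is the claim that the diagram of $\pi$ has exactly $kn+\ell(\pi)$ shaded squares. Each dot $(i,\pi(i)-i+1)$ shades $\pi(i)-i$ squares by the row rule and the same number $|\Delta_{(i,\pi(i)-i+1)}|=\pi(i)-i$ by the sub-antidiagonal rule; the row-shadings of distinct dots are pairwise disjoint (one dot per row), and the sub-antidiagonal shadings of distinct dots are pairwise disjoint because $\Delta_{(i,\pi(i)-i+1)}$ lies on the antidiagonal of content $\pi(i)$ and $\pi$ is injective. Since $\sum_{i\in[n]}(\pi(i)-i)=kn$, the shaded squares, counted with multiplicity, total $2kn$, so $|\{\text{shaded}\}|=2kn-N$ with $N$ the number of squares shaded by \emph{both} rules. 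I would then check that $(a,b)\mapsto\bigl(a,\pi^{-1}(a+b-1)\bigr)$ restricts to a bijection from the row-shaded squares that are \emph{not} sub-antidiagonal-shaded onto the inversions of $\pi$: a row-shaded square $(a,b)$ automatically satisfies $\pi(a)>a+b-1=\pi\bigl(\pi^{-1}(a+b-1)\bigr)$, and a short argument with the cyclic order shows it avoids every $\Delta$ precisely when $\pi^{-1}(a+b-1)$ comes strictly after $a$, i.e.\ precisely when $\bigl(a,\pi^{-1}(a+b-1)\bigr)$ is an inversion. Hence $kn-N=\ell(\pi)$, so $|\{\text{shaded}\}|=kn+\ell(\pi)$ and the diagram has $n(n+1-k)-\ell(\pi)$ white squares.

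It remains to connect the white-square count to $d$, and I expect this to be the main obstacle, since it imports an external model and a careful dictionary with the diagram used here. One route: both the white squares of the diagram of $\pi$ and the $+$-entries of the Le diagram of $\mP$ are determined by the same datum -- the Grassmann necklace, equivalently the bounded affine permutation, of $\mP$ -- so a direct translation gives $d=|\{\text{white squares}\}|-c(k,n)$ for a constant $c(k,n)$ depending only on $k$ and $n$; evaluating on $\mU_{k,n}$ (white squares $n(n+1-k)$, $d=k(n-k)$, $\ell=0$) pins down the constant and gives $d=k(n-k)-\ell(\pi)$, i.e.\ $\codim\mS_\mP=\ell(\pi)$. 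A cleaner, self-contained alternative is to invoke the Knutson--Lam--Speyer realization of the positroid stratification of $\Gr_{\geq 0}(k,n)$ as an order-reversed copy of a lower interval in the affine Bruhat order on the affine symmetric group $\widetilde S_n$, where the codimension of a stratum is its affine Coxeter length and the Coxeter length of a bounded affine permutation equals its number of inversions as in the statement. In either presentation, the identity $|\{\text{shaded squares of the diagram of }\pi\}|=kn+\ell(\pi)$ from the second step is the bridge between $\ell(\pi)$ and the ranked essential family used in the sequel.
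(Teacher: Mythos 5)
The paper gives no proof of this statement: it is imported verbatim as \cite[Theorem~3.16]{knutson2013positroid}, and your opening observation that it can simply be quoted is exactly what the paper does, so on the primary point you agree with the source. Your supplementary sketch is worth a remark, though. The count of shaded squares is correct and its key identity $\ell(\pi)=\sum_{(i,j)\in D(\pi)}|D(\pi)\cap T_{(i,j)}|$ is precisely the starting point of the paper's own proof of \Cref{thm : dim formula}, so that part is a genuinely useful bridge to the ranked essential family. However, the sketch does not actually close the loop: your route (1) asserts $d=|\{\text{white squares}\}|-c(k,n)$ ``by direct translation,'' but that identity is equivalent to the theorem itself given your earlier computation, so pinning the constant on $\mU_{k,n}$ only works once the $\pi$-independence of $c(k,n)$ is established, which is the whole content of the claim; and route (2) simply re-invokes the Knutson--Lam--Speyer result being proved. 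So as a citation your proposal matches the paper, but the self-contained argument you outline has a genuine gap at the step connecting the white-square count (equivalently $\ell(\pi)$) to the dimension of the cell.
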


The codimension of a positroid cell can be determined from its ranked essential family or its core.

\begin{proposition}\label{thm : dim formula}
    Let $\mE^{\mathrm{r}}$ be the set of ranked essential sets of a rank $k$ positroid $\mP$. Then
    \[ \codim(\mS_{\mP}) = \sum_{(r,[i,j])\in \mE^{\mathrm{r}}} (k-r)e_{[i,j]} = \sum_{(r,[i,j])\in \mC^{\mathrm{r}}} (k-r)e_{[i,j]}. \]
\end{proposition}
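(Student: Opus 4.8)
The plan is to compute $\codim(\mS_{\mP})$ via \Cref{thm : codim = length}, which identifies it with the length $\ell(\pi)$, and then to evaluate $\ell(\pi)$ in terms of the ranked essential family. The second equality in the statement is immediate: if $(r,[i,j]) \in \mE^{\mathrm{r}} \setminus \mC^{\mathrm{r}}$ then $e_{[i,j]} \le 0$ by definition of the core, and among such sets only $[1,n]$ can have $e_{[1,n]} < 0$ (the excess being non-negative on proper essential sets, which follows by induction on $|I|$ using supermodularity of the nullity $\eta(I):=|I|-\rank(I)$), while $[1,n]$ contributes $0$ since $k-r=0$; hence the sums over $\mE^{\mathrm{r}}$ and over $\mC^{\mathrm{r}}$ coincide. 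It remains to prove $\ell(\pi) = \sum_{(r,[i,j]) \in \mE^{\mathrm{r}}}(k-r)e_{[i,j]}$.

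First I would rewrite the length row by row. For $i \in [n]$ put $\ell_i = \#\{ j : i < j \le i+n,\ \pi(i) > \pi(j)\}$, so $\ell(\pi) = \sum_{i \in [n]} \ell_i$. Since $j \le \pi(j) < \pi(i) \le i+n$ whenever $i<j$ and $\pi(i)>\pi(j)$, the bound $j\le i+n$ is automatic and $\ell_i = \#\{ j \in \{i+1,\dots,\pi(i)-1\} : \pi(j) \le \pi(i)-1\}$. Comparing with \Cref{prop : properties positroid permutations}(2) applied to the cyclic interval $J_i := [i+1,\pi(i)-1]$ (empty when $\pi(i)\le i+1$), this count is exactly the nullity $\eta(J_i)$. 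Thus
\[ \ell(\pi) = \sum_{i \in [n]} \bigl(|[i+1,\pi(i)-1]| - \rank([i+1,\pi(i)-1])\bigr), \]
a sum of coranks of the cyclic intervals lying \emph{just inside} each dot of the diagram of $\pi$.

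Next I would redistribute these coranks over the essential family. The two facts to establish are: (A) for every $i\in[n]$, $\eta(J_i) = \sum_{(r,I)\in\mE^{\mathrm{r}},\, I\subseteq J_i} e_I$; and (B) for every $(r,I)\in\mE^{\mathrm{r}}$, $\#\{ i\in[n] : I\subseteq J_i\} = k-r$. Granting these, interchanging the order of summation gives
\[ \ell(\pi) = \sum_{i\in[n]}\ \sum_{\substack{(r,I)\in\mE^{\mathrm{r}}\\ I\subseteq J_i}} e_I \;=\; \sum_{(r,I)\in\mE^{\mathrm{r}}} e_I\cdot\#\{ i\in[n] : I\subseteq J_i\} \;=\; \sum_{(r,I)\in\mE^{\mathrm{r}}} (k-r)e_I, \]
which is the desired identity (the term of $[1,n]$ contributing $0$). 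Fact (B) is a direct bookkeeping computation: using \Cref{prop : properties positroid permutations} to read ranks of cyclic intervals off $\pi$, the containment $I=[a,b]\subseteq[i+1,\pi(i)-1]$ is equivalent to $i\notin[a,b]$ together with $\pi(i)$ being at least the first integer $\equiv b+1\pmod n$ that is strictly larger than $i$; counting the rows $i$ with this property and matching the result against the description of $k-\rank([a,b])$ through the complement $[n]\setminus[a,b]$ yields the equality.

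The main obstacle is (A). For a general cyclic interval the nullity need \emph{not} equal the sum of the excesses of the essential sets it contains — a circuit inside the interval may only be witnessed by an essential set sticking out of it. The intervals $J_i=[i+1,\pi(i)-1]$ are special: by the very definition $\pi(i)=\min\{ j\ge i : i\in\langle[i+1,j]\rangle\}$ one has $i\notin\langle J_i\rangle$. I would use this to show that for any circuit $C\subseteq J_i$, the smallest cyclic interval $[a,b]$ containing $C$ satisfies $\rank([a,b])=\rank([a+1,b])=\rank([a,b-1])$ and hence (as in the proof of \Cref{thm : rank cyclic intervals}) lies in an essential set $J$ of the same rank; then $J$ cannot contain $i$ nor $\pi(i)\bmod n$ — enlarging a subinterval of $J_i$ across the endpoint $i$ would put $i$ in $\langle J_i\rangle$, and the inverse-permutation description of closure forbids crossing the endpoint $\pi(i)\bmod n$ — so $J\subseteq J_i$. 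Consequently the minimum computing $\rank(J_i)$ in \Cref{thm : rank cyclic intervals} is attained by an essential set contained in $J_i$, and the excess recursion telescopes over the essential sets nested inside $J_i$ (using, for essential sets that merely overlap there, that they impose no independent dependency, as in the discussion following \Cref{thm : prop ess sets} and \cite[Theorem~25]{rankfunction2020}), giving $\eta(J_i)=\sum_{I\subseteq J_i}e_I$.
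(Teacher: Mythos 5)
Your argument follows the same route as the paper's proof: reduce to $\ell(\pi)$ via \Cref{thm : codim = length}, decompose the length row by row, identify each row's contribution with the sum of excesses of the essential sets it contains, and observe that each essential set of rank $r$ is picked up exactly $k-r$ times. Your facts (A) and (B) are precisely the paper's two counting claims translated from the diagram language (dots in $T_{(i,\pi(i)-i+1)}$ and connected components of the diagram) into nullity language, and your handling of the delicate step (A) is, if anything, more explicit than the paper's.
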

\begin{proof}
    Let $\pi$ be the bounded affine permutation corresponding to the positroid $\mP$. By \Cref{thm : codim = length}, we need to prove that $\ell(\pi) = \sum_{(r,[i,j])\in \mE^{\mathrm{r}}} e_{[i,j]}(k-r) $. Consider the dotted array of $\pi$ and note that the length of $\pi$ can be computed by looking at the array as:
    \[ \ell(\pi) = \sum_{(i,j)\in D(\pi)} |D(\pi) \cap T_{(i,j)}|. \]
    Indeed, every time a dot $(i,\pi(i)-i+1)$ is counted in the previous formula,  there exists a $j<i$ such that $\pi(j)>\pi(i)$, indicating one inversion.
    
    Note that, if the dot in row $i$ lies in $D(\pi)\cap T_{(j,\pi(j)-j+1)}$, then $(j,\pi(j)-j+1)$ defines a bounded section of the diagram of $\pi$, which is non-empty since it contains the dot $(i, \pi(i)-i+1)$. Hence there is a corner, i.e. an essential set inside $P_{(j,\pi(j)-j+1)}$. Let $(r,I)\in \mE^{\mathrm{r}}$ be such an essential set. The number of dots in the same connected component in the diagram as $I$ is exactly the excess of $I$, $e_I$.
    We have to count the dot in the same connected component as the essential set $[i,j]$ as many times as the number of inversions it is contained in, i.e. as many times as the number of $h$ such that $(i,j-i+1) \in T_{(h,\pi(h)-h+1}$. Consider the antidiagonal through the point $(i,j-i+1)$. On the right side of such a diagonal there are $r$ dots, equal to the rank of $[1,n]$. Among these points, those that satisfy the property described above are the dots not in $P_{(i,j-i+1)}$. It follows that, if the essential set $I$ has rank $r$, there are $k-r$ such points.
    Hence we can write:
    \begin{align*}
        \ell(\pi) &= \sum_{(i,j)\in D(\pi)} |D(\pi) \cap T_{(i,j)}| 
        = \sum_{i\in [n]}\sum_{\mE\ni I\subseteq [i,\pi(i)]} e_I 
        = \sum_{(r,I)\in \mE^{\mathrm{r}}} (k-r)e_I.
    \end{align*}
The last equality in the statement holds because $e_I > 0$ if and only if $(r,I) \in \mC^{\mathrm{r}}$, i.e. it is in the core of the ranked essential family.
\end{proof}

\begin{example}\leavevmode
\begin{itemize}
    \item The uniform matroid $\mathcal{U}_{k,n}$ of rank $k$ on $n$ elements has, as shown in \Cref{ex : ess set uniform matroid}, essential family given by $\{ (k, [n]) \}$. Hence, the codimension of the corresponding cell is $0$. This is indeed the top-dimensional positroid cell in the totally non-negative Grassmannian.

    \item Let $\mP$ be the positroid from \Cref{ex : diagram}. Its ranked essential family is 
    \[\mE^{\mathrm{r}} = \{ (1,[5,6]), (2, [1,4]), (2, [4,7]), (3, [1,8]) \}.\]
    The codimension of the corresponding positroid cell is:
   $\codim(\mS_\mP) = (3-1) + 2(3-2) + (3-2) = 5.$
\end{itemize}  
\end{example}

\subsection{Boundaries of a positroid cell}

Having an explicit formula for the dimension, we can study how to construct positroid cells lying in the boundary of {the Zariski closure of} a given positroid cell via essential sets. In particular, we can construct all of the codimension $1$ positroid cells.

\begin{example}\label{ex : boundaries}
 Consider the positroid $\mP$ from \Cref{ex : diagram}, whose ranked essential family is $\mE^{\mathrm{r}} = \{ (1,[5,6]), (2, [1,4]), (2, [4,7]), (3, [1,8]) \}$, and whose positroid cell has codimension $5$. Then there are $9$ positroid cells of codimension $1$ contained in {the Zariski closure of} $\mS_\mP$ whose ranked essential families are described in the following table. Each row contains respectively the rank, the cyclic interval, and the excess of the essential set.
    \begin{figure}[h]
        \centering
        \includegraphics[width=0.9\linewidth]{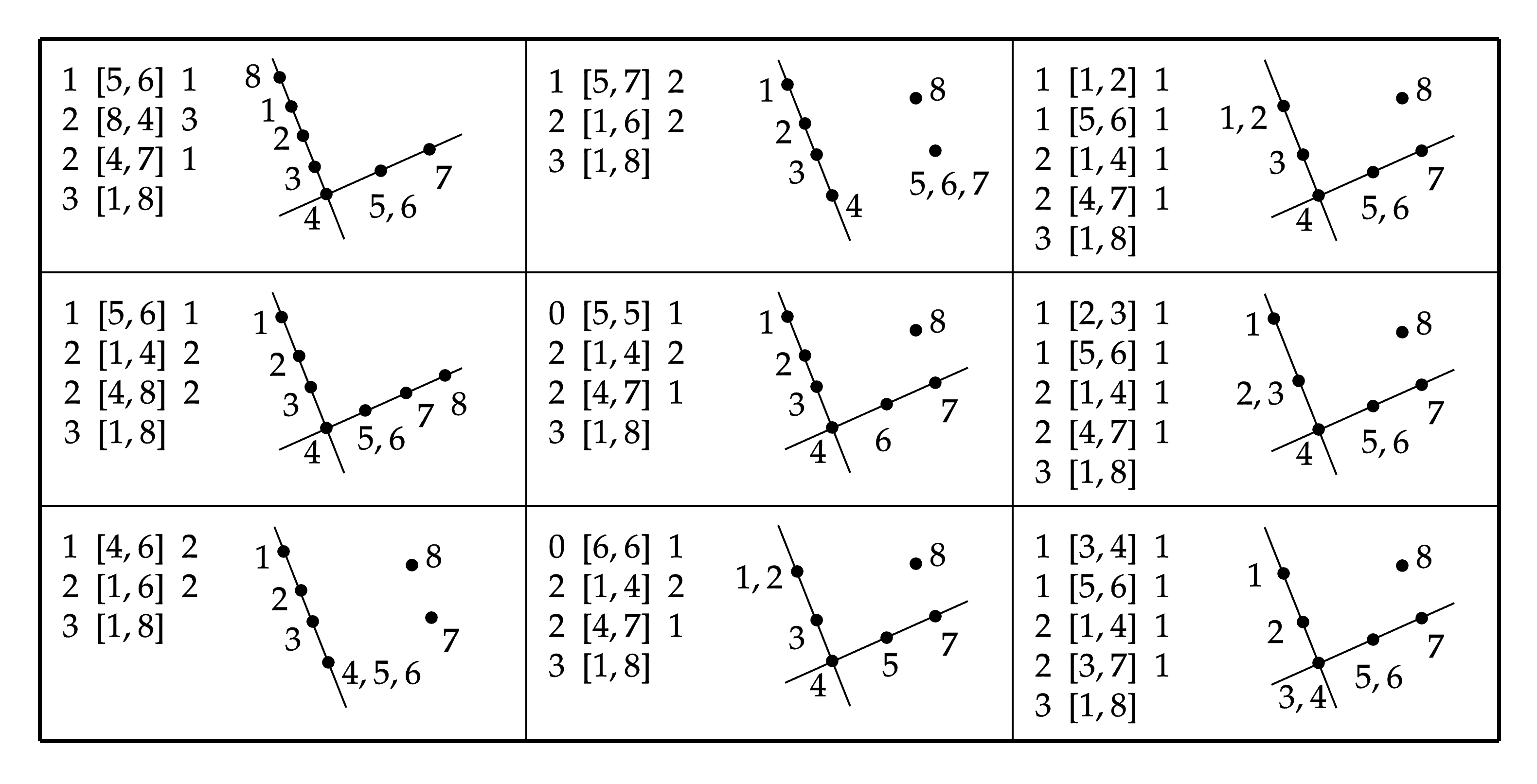}
        \caption{{The positroids whose cell lies in the Zariski closure of the cell of the positroid from \Cref{ex : boundaries}. The data on the left of each box is the rank, the cyclic interval and the excess of each essential set of the family. On the right, the point-line configuration of the positroid is depicted.}}
        \label{fig : ex codim 1 boundaries}
    \end{figure}
\end{example}

\subsection{Positroid varieties} \label{section : variety}

Given a rank $k$ positroid on $[n]$, labeled by the bounded affine permutation $\pi$, the \emph{positroid variety} $\Pi_\pi$ is the Zariski closure of the positroid cell $\mS_\pi$ inside $\Gr(k,n)$. The following result generalizes \cite[Proposition~2.3]{knutson2010puzzles}, which states an equivalent result for interval positroid varieties,~i.e. positroids defined by rank conditions on intervals.

\begin{proposition}
    The positroid variety $\Pi_\pi$ is defined as a scheme by the rank conditions $\rank(I)\leq r$ for every $(r,I)\in \mC\mE_\pi^{\mathrm{r}}$, the connected elements of the ranked essential family of $\mP_\pi$.
\end{proposition}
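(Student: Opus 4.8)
The plan is to reduce to the known interval case \cite{knutson2010puzzles} by comparing two schemes: the positroid variety $\Pi_\pi$ defined as the Zariski closure of $\mathcal{S}_\pi$, and the scheme $X$ cut out by the determinantal equations coming from the rank conditions $\rank(I)\leq r$ for $(r,I)\in\mathcal{C}\mE_\pi^{\mathrm{r}}$. Concretely, for each connected essential set $(r,[i,j])$, the condition $\rank([i,j])\le r$ translates into the vanishing of all $(r+1)\times(r+1)$ minors of the submatrix of a representing matrix formed by the columns indexed by $[i,j]$; these determinantal equations (together with the Plücker relations) define $X$ as a subscheme of $\Gr(k,n)$. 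First I would record that $\Pi_\pi\subseteq X$ set-theoretically and scheme-theoretically is automatic, since every point of $\mathcal{S}_\pi$ satisfies all these rank bounds and the ideal defining $X$ consists of functions vanishing on a cell whose closure is $\Pi_\pi$; the content is the reverse inclusion as schemes, i.e. that these equations generate the full ideal (or at least define the same scheme structure that Knutson--Lam--Speyer show $\Pi_\pi$ carries).

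The key steps, in order, would be: (1) By \Cref{rmk : rank from connected}, the rank function on \emph{all} cyclic intervals is recovered from the connected essential data, so imposing $\rank(I)\le r$ only for $(r,I)\in\mathcal{C}\mE_\pi^{\mathrm{r}}$ already forces $\rank([a,b])\le\rank_\pi([a,b])$ for every cyclic interval $[a,b]$; hence $X$ coincides set-theoretically with the union of closures of positroid cells $\mathcal{S}_{\pi'}$ with $\pi'\ge\pi$ in the affine Bruhat order, which is exactly $\Pi_\pi$ as a set. (2) Invoke the cyclic symmetry of the Grassmannian: rotating indices $i\mapsto i+1$ is an automorphism of $\Gr(k,n)$ that sends positroid varieties to positroid varieties and permutes cyclic intervals cyclically. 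Knutson's \cite[Proposition~2.3]{knutson2010puzzles} handles the case where all essential sets are genuine (non-cyclic) intervals; the idea is that locally — after a suitable cyclic rotation and restriction to an affine chart of $\Gr(k,n)$ — each cyclic interval becomes an honest interval, so the interval-positroid-variety result applies chart by chart. (3) Glue: cover $\Gr(k,n)$ by the $n$ cyclic rotations of a standard affine chart; on each chart the relevant essential sets are intervals (any cyclic interval not containing the "cut point" of that chart is an interval there, and one checks the essential sets that \emph{do} wrap around are handled by another chart), so $X$ and $\Pi_\pi$ agree scheme-theoretically on each chart, hence globally. (4) Finally note that it suffices to use the \emph{connected} essential sets rather than all of $\mE^{\mathrm{r}}$, because a disconnected essential set's rank condition is implied — as a scheme, not just set-theoretically — by those of its disjoint connected pieces together with the "trivial" conditions $\rank(\{t\})\le 1$; this is the same submodularity/additivity bookkeeping used in \Cref{thm : rank cyclic intervals} and \Cref{rmk : rank from connected}, now lifted to the level of determinantal ideals.

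The main obstacle I anticipate is step (3)–(4): passing from the set-theoretic equality (which is essentially immediate from \Cref{thm : rank cyclic intervals}) to the scheme-theoretic statement, and in particular showing that the \emph{connected} essential conditions alone generate the ideal and not merely its radical. Knutson--Lam--Speyer prove that positroid varieties are reduced (indeed normal, Cohen--Macaulay with rational singularities), so $\Pi_\pi$ is the reduced scheme on its support; thus once set-theoretic equality is established, it remains only to check that the determinantal scheme $X$ is \emph{also} reduced on the charts where the cyclic intervals are ordinary intervals — and that is exactly the content of \cite[Proposition~2.3]{knutson2010puzzles} for interval positroid varieties. So the real work is organizing the chart cover so that every connected essential set appears as an ordinary interval in at least one chart, and verifying that the finitely many determinantal equations restricted to that chart cut out the reduced interval positroid variety there; the periodicity/boundedness of $\pi$ guarantees each cyclic interval is "uncut" by at least one of the $n$ rotations, which makes the cover work.
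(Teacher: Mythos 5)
Your steps (1) and (4) — reducing the full set of cyclic-interval rank conditions to the connected essential ones via \Cref{thm : rank cyclic intervals} and \Cref{rmk : rank from connected}, and lifting that implication to the level of determinantal ideals — are exactly the content of the paper's (very short) proof; in fact your step (4) is more careful than the paper, which only says the condition for a general $[i,j]$ is ``implied'' by those in $\mC\mE^{\mathrm{r}}$ without spelling out that the generalized Laplace expansion over the column blocks $[i_a,j_a]$ puts every $(R+1)$-minor of $M_{[i,j]}$ into the ideal generated by the $(r_a+1)$-minors of the blocks.

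The gap is in steps (2)--(3), your proposed chart-by-chart reduction to \cite[Proposition~2.3]{knutson2010puzzles}. That proposition applies to a scheme cut out \emph{only} by rank conditions on honest intervals. On any fixed rotation/chart, your scheme $X$ is still cut out by \emph{all} of its defining equations, including those coming from essential sets that wrap around the cut point of that chart; you cannot discard the wrapping conditions and ``handle them on another chart,'' because the restricted scheme is then a proper closed subscheme of the interval positroid variety that Knutson's result describes, and you learn nothing about its scheme structure. What would be needed is a single rotation making \emph{all} connected essential sets into honest intervals simultaneously, and this fails in general: for the positroid in the paper with essential family containing $[3,9]$, $[6,3]$ and $[9,6]$ in $[9]$, no cyclic relabeling unwraps all three at once, so the positroid is not an interval positroid in any rotation. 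The paper sidesteps this entirely by taking as known (from Knutson--Lam--Speyer) that $\Pi_\pi$ is defined as a scheme by the rank conditions on all cyclic intervals — equivalently, that it is the scheme-theoretic intersection of the cyclically rotated Schubert varieties, a genuinely global statement proved by Frobenius-splitting methods rather than by patching the interval case. With that input, your steps (1) and (4) finish the argument; without it, your route does not close.
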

\begin{proof}
    By \Cref{rmk : rank from connected} and \Cref{thm : rank cyclic intervals}, the rank of each cyclic interval $[i,j]$  can be computed in terms of the rank of the connected essential sets. Hence, the rank condition for $[i,j]$ is implied by those in~$\mC\mE^{\mathrm{r}}$.
\end{proof}

\section{Small rank positroids} \label{section : small rank}

We now focus on rank $2$ positroids and show how essential sets coincide with the construction given in \cite{mohammadi2022computing}. Moreover, in this case, essential sets correspond to the circuit closures of the positroids.

\medskip
Let $\mP$ be a positroid defined by its flats $\mF$ and consider the following family:
\[ \mathcal{F}^{\mathrm{r}} = \{ (r,F) \mid F \in \mF, r =\rank(F)<|F|\}. \]

If $\mP$ is a loopless rank $2$ positroid, then $\mF$ is equal to the family of circuit closures of the positroid.

\begin{proposition}
  If $\mP$ is a loopless positroid of rank $2$,~its ranked essential family $\mE^{\mathrm{r}}$ coincides with $\mathcal{F}^{\mathrm{r}}$.
\end{proposition}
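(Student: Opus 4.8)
The plan is to show that both families consist of the same pairs $(r,I)$, using the characterization of essential sets via the rank function (\Cref{thm : ess set and rank matrix}) together with the special structure of rank $2$ loopless positroids. Since $\mP$ is loopless of rank $2$, its flats are: the empty set (rank $0$), the rank $1$ flats (which are the parallel classes, each the closure of a single point), the rank $2$ proper flats, and the whole ground set $[n]$. A flat $F$ with $\rank(F)<|F|$ is therefore either a parallel class of size $\geq 2$, or a rank $2$ flat of size $\geq 3$ (a line through $\geq 3$ points), or $[n]$ itself. Correspondingly, $\mathcal{F}^{\mathrm{r}}$ consists of: $(2,[n])$, the pairs $(1,P)$ for parallel classes $P$ with $|P|\geq 2$, and the pairs $(2,L)$ for lines $L$ with $|L|\geq 3$. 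The first reduction I would make is: in a positroid, every flat is a cyclic interval (this is standard for positroids — closed sets of positroids are cyclic intervals; it follows from \Cref{prop : positroid polytope} or can be cited), so each such $F$ is indeed a cyclic interval and the comparison with $\mE^{\mathrm{r}}\subseteq \CI_n$ makes sense.

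Next I would verify the inclusion $\mathcal{F}^{\mathrm{r}}\subseteq \mE^{\mathrm{r}}$ using \Cref{thm : ess set and rank matrix}. Given $(r,F)\in\mathcal{F}^{\mathrm{r}}$ with $F=[i,j]$ and $i\neq j$, I must check $\rank([i,j])=\rank([i+1,j])=\rank([i,j-1])=\rank([i-1,j])-1=\rank([i,j+1])-1$. Because $F$ is a flat, adding any element outside $F$ strictly increases the rank, so $\rank([i-1,j])=\rank([i,j+1])=r+1$ (here using that $F=[i,j]$ is a \emph{cyclic interval} flat, so $i-1,j+1\notin F$; for $F=[n]$ this case is vacuous since $[n]$ is always in $\mE^{\mathrm{r}}$). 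For the equalities $\rank([i,j])=\rank([i+1,j])=\rank([i,j-1])$: since $\rank(F)<|F|$, the interval $F$ is dependent, and in a loopless rank $\leq 2$ positroid a dependent cyclic interval $[i,j]$ has the property that both endpoints lie in the closure of the interior — concretely, if $F$ is a parallel class then any two of its points are parallel so removing an endpoint doesn't change the rank (which is $1$); if $F$ is a line with $\geq 3$ points, removing one endpoint still leaves $\geq 2$ points spanning the line, so the rank stays $2$. This gives the required equalities, hence $(r,[i,j])\in\mE^{\mathrm{r}}$. The case $i=j$ does not arise since $\mP$ is loopless (a size-$1$ flat of rank $0$ would be a loop).

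For the reverse inclusion $\mE^{\mathrm{r}}\subseteq\mathcal{F}^{\mathrm{r}}$, take $(r,[i,j])\in\mE^{\mathrm{r}}$, $r=\rank([i,j])$. By (E1), $r<|[i,j]|$, so $[i,j]$ is dependent and $r\in\{0,1,2\}$; looplessness rules out $r=0$ (that would force $j$ to be a loop, as in the $i=j$ analysis). If $[i,j]=[n]$ then $r=k=2$ and $(2,[n])\in\mathcal F^{\mathrm r}$. Otherwise, by \Cref{thm : ess set and rank matrix} we have $\rank([i,j])=\rank([i,j-1])=\rank([i+1,j])$ but $\rank([i-1,j])=\rank([i,j+1])=r+1$. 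The equalities $\rank([i,j+1])=\rank([i,j])+1$ and $\rank([i-1,j])=\rank([i,j])+1$, combined with the fact (valid in positroids, by \cite{rankfunction2020} or directly) that the closure of a cyclic interval is a cyclic interval, force $\langle[i,j]\rangle=[i,j]$: indeed $\langle[i,j]\rangle$ is a cyclic interval containing $[i,j]$, and if it were strictly larger it would contain $i-1$ or $j+1$, contradicting that adding either raises the rank. Thus $[i,j]$ is a flat, and with $r=\rank([i,j])<|[i,j]|$ we get $(r,[i,j])\in\mathcal F^{\mathrm r}$.

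The main obstacle I anticipate is the two "local" rank computations — that removing an endpoint of a dependent cyclic-interval flat does not change its rank, and that the closure of a cyclic interval is again a cyclic interval. The second is a known structural fact about positroids that I would cite (it is implicit in \Cref{prop : positroid polytope} and appears in \cite{ardila2016positroids, rankfunction2020}); the first is where the rank-$2$ loopless hypothesis is really used, and I would handle it by the elementary case split (parallel class vs.\ $\geq 3$-point line) indicated above, noting that in rank $2$ there are no other kinds of proper dependent flats. Everything else is bookkeeping with \Cref{thm : ess set and rank matrix}.
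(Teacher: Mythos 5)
Your overall route is sound and genuinely different from the paper's: you verify both inclusions directly against the rank-function characterization of essential sets (\Cref{thm : ess set and rank matrix}), whereas the paper identifies $\mF^{\mathrm{r}}$ with the connected components of the graph attached to a rank $2$ positroid in \cite{mohammadi2022computing} and imports the needed structure from there. However, there is a genuine gap in your justification of the one non-trivial input. You assert as ``standard'' that every flat of a positroid is a cyclic interval, and later that the closure of a cyclic interval is a cyclic interval, claiming both follow from \Cref{prop : positroid polytope}. Both statements are \emph{false} for general positroids: the paper's own closing Remark exhibits a rank $3$ positroid on $[7]$ in which $\langle\{1,2\}\rangle=\{1,2,5\}$, a flat (and the closure of a cyclic interval) that is not a cyclic interval. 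The polytope description by cyclic-interval inequalities does not imply anything of the sort. What is true — and is exactly where the loopless rank $2$ hypothesis does real work — is that the parallel classes of a loopless rank $2$ positroid are cyclic intervals; this is precisely the content of \cite[Proposition~3.9]{mohammadi2022computing}, which the paper cites, and it needs either that citation or an explicit argument (e.g.\ the sign analysis of $2\times 2$ minors showing that any column lying angularly ``between'' two parallel columns must be parallel to them). As written, the load-bearing step of your proof rests on a false general principle.

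Once that step is repaired (cite \cite[Proposition~3.9]{mohammadi2022computing} or prove the rank $2$ statement directly), the rest of your argument goes through, with two small clean-ups: in a rank $2$ matroid the only rank $2$ flat is $[n]$ itself, so your case ``a line with $\geq 3$ points'' is not a separate case and your claim that deleting an endpoint of such a line leaves a spanning set is not needed (and is not true in general, e.g.\ $[n]=\{1,2,3\}$ with $2\parallel 3$); and in the reverse inclusion the bound $\rank([i-1,j])=r+1\leq k=2$ already forces $r=1$ for every proper essential set, which simplifies the closure argument to the single statement that the parallel class containing $[i,j]$ equals $[i,j]$.
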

\begin{proof}
If $\mP$ has rank $2$, then $\mF^{\mathrm{r}}$ is the family of connected components of the graph associated to the positroid $\mP$ as constructed in \cite[Section 3.2]{mohammadi2022computing}. In \cite[Proposition 3.9]{mohammadi2022computing}, it is shown that every element of $\mF^{\mathrm{r}}$ is a cyclic interval, making it an essential set. Conversely, each essential set of rank $1$ gives rise to a connected component in the graph associated with the positroid $\mP$. Hence, $\mE^{\mathrm{r}} = \mF^{\mathrm{r}}$.
\end{proof}

In particular we can easily characterize positroids in terms of the family of circuit closures.

\begin{corollary}
    If $\mM$ is a loopless matroid of rank $2$, $\mM$ is a positroid if and only if $\mF^{\mathrm{r}}$ consists of cyclic intervals.
\end{corollary}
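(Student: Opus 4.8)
The plan is to combine the Proposition above with the chess theorem, \Cref{thm : chess theorem}. If $n=2$, the only loopless rank $2$ matroid is $\mathcal{U}_{2,2}$, which is a positroid and satisfies $\mF^{\mathrm{r}}=\emptyset$; so we may assume $n\ge 3$. In a loopless rank $2$ matroid the rank $1$ flats are exactly the parallel classes, and they partition $[n]$; hence $\mF^{\mathrm{r}}$ is precisely the set of parallel classes of size $\ge 2$, each carrying the label $1$, together with the pair $(2,[n])$ (since $\rank([n])=2<n$). The forward implication is then immediate: if $\mM$ is a loopless rank $2$ positroid, the Proposition gives $\mF^{\mathrm{r}}=\mE^{\mathrm{r}}$, and every element of a ranked essential family is a cyclic interval by construction.

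For the converse, assume $\mF^{\mathrm{r}}$ consists of cyclic intervals; I would show that $\mE^{\mathrm{r}}:=\mF^{\mathrm{r}}$ satisfies (E1), (E2) and (E3), and then invoke \Cref{thm : chess theorem}. Property (E1) holds because $(2,[n])\in\mE^{\mathrm{r}}$ and every nontrivial parallel class $F$ has $2\le|F|<n$ and $F\subsetneq[n]$, whence $|F|>1\ge 0$ and $|[n]\setminus F|\ge 1=k-r>0$ with $k=2$, $r=1$. Property (E2) is vacuous, since distinct members of $\mE^{\mathrm{r}}\setminus\{(2,[n])\}$ are distinct parallel classes, hence disjoint, so no strict containment occurs. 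For (E3), any two distinct members $(1,[i_1,j_1]),(1,[i_2,j_2])$ of $\mE^{\mathrm{r}}\setminus\{(2,[n])\}$ are disjoint cyclic intervals of size $\ge 2$, so only case~(1) of (E3) can arise; there every essential set containing $[i_1,j_2]$ has label $\le k=2$, so $r_3\le 2$, while $r_4\ge 0$ and $|[j_1+1,i_2-1]\setminus[i_4,j_4]|\ge 0$, giving $r_1+r_2=2\ge r_3-r_4-|[j_1+1,i_2-1]\setminus[i_4,j_4]|$. Thus, by \Cref{thm : chess theorem}, $\mF^{\mathrm{r}}$ is the ranked essential family of a positroid $\mP$.

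It remains to check $\mP=\mM$. Since $(2,[n])=(k,[1,n])\in\mE^{\mathrm{r}}(\mP)=\mF^{\mathrm{r}}$, the positroid $\mP$ has rank $2$; and it is loopless, because \Cref{thm : rank cyclic intervals} computes $\rank_{\mP}(\{i\})=\min\{1,\dots\}=1$ for each $i\in[n]$ (the value $1$ being attained already by $(0,\emptyset)$). Applying the Proposition to the loopless rank $2$ positroid $\mP$ gives $\mF^{\mathrm{r}}(\mP)=\mE^{\mathrm{r}}(\mP)=\mF^{\mathrm{r}}(\mM)$. Finally, a loopless rank $2$ matroid is determined by $\mF^{\mathrm{r}}$: the label-$1$ members are its parallel classes of size $\ge 2$, the remaining ground-set elements form singleton parallel classes, and a two-element set is a basis exactly when its elements lie in different parallel classes. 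Hence $\mP=\mM$, so $\mM$ is a positroid.

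I expect the one genuinely delicate point to be the verification of (E3) in the converse: one must respect the cyclic-order conventions $<_{i_1}$ and the precise meaning of the ``minimal/maximal essential set'' appearing in its statement, and confirm that every configuration of two disjoint nontrivial parallel classes really falls under case~(1). The saving grace is that $r_1+r_2$ already equals $k=2$, which is an upper bound for $r_3$, so each instance of (E3) reduces to a trivial inequality. The other step worth spelling out is the closing one — that $\mF^{\mathrm{r}}$ together with ``loopless of rank $2$'' pins down the matroid — which is just the elementary classification of rank $2$ matroids by their parallel classes.
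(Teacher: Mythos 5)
Your argument is correct, and it is a careful completion of what the paper leaves entirely implicit: the corollary is stated without proof, the intended reading being ``forward direction from the preceding proposition, converse from the general theory.'' Your forward direction is exactly the paper's. For the converse you verify (E1)--(E3) for $\mF^{\mathrm{r}}$ and invoke \Cref{thm : chess theorem}; the verification is sound (in particular, (E2) is indeed vacuous since distinct parallel classes are disjoint, only case (1) of (E3) arises, and $r_1+r_2=2\geq r_3$ kills every instance), and your closing identification $\mP=\mM$ via the classification of loopless rank~$2$ matroids by their parallel classes is the right way to finish --- this step is genuinely needed and easy to forget, since the chess theorem only produces \emph{some} positroid with the given ranked essential family. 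Two small remarks: the statement of (E1) must be read as imposing $|[n]\setminus[i,j]|\geq k-r>0$ only on the proper essential sets, as you implicitly do; and there is a shorter route to the converse that bypasses the chess theorem entirely, namely \Cref{prop : positroid polytope}: for a loopless rank~$2$ matroid the polytope is cut out by $0\leq x_i\leq 1$, $\sum_i x_i=2$, and $\sum_{i\in F}x_i\leq 1$ over the nontrivial parallel classes $F$, so if these are cyclic intervals the polytope has the required inequality description and $\mM$ is a positroid. Either route is valid; yours has the advantage of staying inside the paper's own essential-set machinery.
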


\begin{remark}
    The equivalence between essential sets and ranked circuit closures does not hold for positroids of higher rank. For instance, consider the rank $3$ positroid with ranked essential family 
    \[\mE^{\mathrm{r}} = \{{(1,[1,2]), (2,[1,5]),(2,[5,2]), (3, [1,7])} \}.\] 
    Then $\{1,5\}$ is a circuit of rank $1$ which does not appear in $\mE^{\mathrm{r}}$. {Moreover, $\{1,2\}$ is not a flat of the positroid, as its closure is given by $\{1,2,5\}$}.
\end{remark}

\bibliographystyle{abbrv}
\bibliography{Arxiv}

\bigskip
\noindent 
\textbf{Authors' addresses}

\noindent
Department of Computer Science, KU Leuven, Celestijnenlaan 200A, B-3001 Leuven, Belgium\\ 
   Department of Mathematics, KU Leuven, Celestijnenlaan 200B, B-3001 Leuven, Belgium\\
   UiT – The Arctic University of Norway, 9037 Troms\o, Norway\\
    E-mail address: {\tt fatemeh.mohammadi@kuleuven.be}

\medskip  \noindent
Department of Mathematics, KU Leuven, Celestijnenlaan 200B, B-3001 Leuven, Belgium
\\ E-mail address: {\tt francesca.zaffalon@kuleuven.be}

\end{document}